\newtheorem{theorem}{Theorem}[section]
\newtheorem{corollary}[theorem]{Corollary}
\newtheorem{lemma}[theorem]{Lemma}
\newtheorem{proposition}[theorem]{Proposition}
\numberwithin{equation}{section}
\newcommand{\A}{\mathbf{A}}
\newcommand{\Af}{\mathbf{A}_{\fin}}
\newcommand{\bs}{\backslash}
\newcommand{\comment}[1]{}
\newcommand{\C}{\mathbf{C}}
\newcommand{\ds}{\displaystyle}
\newcommand{\e}{\varepsilon}
\newcommand{\f}{f}
\newcommand{\fin}{\operatorname{fin}}
\newcommand{\g}{\gamma}
\newcommand{\GL}{\operatorname{GL}}
\newcommand{\mat}[4]{\begin{pmatrix} {#1} & {#2} \\ {#3} & {#4}
  \end{pmatrix}}
\newcommand{\meas}{\operatorname{meas}}
\renewcommand{\mod}{\text{ mod }}
\newcommand{\n}{n}
\newcommand{\ord}{\operatorname{ord}}
\newcommand{\ol}{\overline}
\newcommand{\olG}{\overline{G}}
\newcommand{\Q}{\mathbf{Q}}
\newcommand{\R}{\mathbf{R}}
\renewcommand{\Re}{\operatorname{Re}}
\newcommand{\sg}[1]{\left<{#1}\right>}
\newcommand{\sgn}{\operatorname{sgn}}
\newcommand{\smat}[4]{\bigl(\begin{smallmatrix}{#1}&{#2}\\{#3}&{#4}\end{smallmatrix}\bigr )}
\newcommand{\SL}{\operatorname{SL}}
\newcommand{\SO}{\operatorname{SO}}
\newcommand{\Span}{\operatorname{Span}}
\newcommand{\Supp}{\operatorname{Supp}}
\newcommand{\w}{\omega}
\newcommand{\Z}{\mathbf{Z}}
\newcommand{\Zhat}{\widehat{\Z}}
\newcommand{\mtxs}[4]{\left( \begin{smallmatrix} {#1} & {#2} \\ {#3} & {#4} \end{smallmatrix} \right)}
\begin{document}
\title{Averages of twisted $L$-functions}
\author{Julia Jackson}
\author{Andrew Knightly}
\address{Department of Mathematics \& Statistics\\University of Maine
\\Neville Hall\\ Orono, ME  04469-5752, USA }

\begin{abstract} We use a relative trace formula on $\GL(2)$ to compute a
  sum of twisted modular $L$-functions anywhere in the critical strip,
  weighted by a Fourier coefficient and a
  Hecke eigenvalue.  When the weight $k$ or level $N$ is sufficiently large, the
  sum is nonzero.  Specializing to the central point, we show in some cases
  that the resulting bound for the average is as good as that
  predicted by the Lindel\"of hypothesis in the $k$ and $N$ aspects.
\end{abstract}
\maketitle
\noindent \today
\thispagestyle{empty}

\section{Introduction} \label{intro}

  In many situations, the central $L$-values of modular forms 
  encode information about related algebraic objects.  For example,
  the non-existence of solutions to certain Diophantine equations can 
  hinge on the existence of cusp forms with non-vanishing
  central twisted $L$-value (see \cite{El1}, \cite{BEN}).
  Techniques from analytic number theory can then be used
  to estimate averages of $L$-values and thereby deduce the existence of
  such cusp forms.  The standard method, introduced by Duke \cite{Du},
  uses the Petersson trace formula together with Weil's bound for Kloosterman
  sums.  In the present paper, we use a different trace formula
  to compute the average of twisted $L$-functions directly at any point
  in the critical strip.  The resulting asymptotic formula has a much better
  error term (as a function of the level) and follows immediately
  without any use of regularization, approximate functional equations, 
  or deep results about Kloosterman sums.

Before stating the main result, we fix the following notation.
Let $S_k(N,\psi)$ be the space of cusp forms $h$ on $\Gamma_0(N)=\{\smat abcd\in \SL_2(\Z)|\,
  c\in N\Z\}$ satisfying
\[h(\frac{az+b}{cz+d})=\psi(d)(cz+d)^kh(z)\]
for all $z$ in the complex upper half-plane $\mathbf{H}$ and all $\smat abcd\in\Gamma_0(N)$,
  where $\psi$ is a Dirichlet character modulo $N$.
We normalize the Petersson inner product on $S_k(N,\psi)$ by
\begin{equation}\label{petnorm}
\|h\|^2=\frac1{\nu(N)}\iint_{\Gamma_0(N)\bs \mathbf H}|h(z)|^2y^k\frac{dx\,dy}{y^2},
\end{equation}
where
\[\nu(N)=[\SL_2(\Z):\Gamma_0(N)].\]

Given $h\in S_k(N,\psi)$, write $h(z)=\sum_{n>0}a_n(h)q^n$ for $q=e^{2\pi i z}$.
Fix an integer $D$ with $(D,N)=1$, and let $\chi$ be a primitive Dirichlet
  character modulo $D$.
The $\chi$-twisted $L$-function of $h$ is given for $\Re(s)>\tfrac k2+1$
   by the Dirichlet series
\[L(s,h,\chi)=\sum_{n>0}\frac{\chi(n)a_n(h)}{n^s}.\]
The completed $L$-function
\[\Lambda(s,h,\chi)=(2\pi)^{-s}\Gamma(s)L(s,h,\chi)\]
has an analytic continuation to the complex plane and 
  satisfies a functional equation relating $s$ to $k-s$, so the central point
  is $s=k/2$.  Taking $\chi$ trivial and $D=1$ gives the usual 
  $L$-function $\Lambda(s,h)$.
  When $N=1$, the functional equation takes the form
\begin{equation}\label{fe}
\Lambda(s,h,\chi)= \frac{i^k}{D^{2s-k}} \frac{\tau(\chi)^2}D \Lambda(k-s,h,\ol\chi),
\end{equation}
where
  \begin{equation}\label{tau}
\tau(\chi)=\sum_{m\in(\Z/D\Z)^*}\chi(m)e^{2\pi i m/D}
\end{equation}
is the Gauss sum attached to $\chi$.

Let $n$ be an integer prime to $N$, and let $T_n$ be the $n$-th Hecke operator, given by
\[T_n h(z) = n^{k-1}\sum_{ad=n,\atop{a>0}}\sum_{b=0}^{d-1}\psi(a)d^{-k}h(\frac{az+b}d).\]
Let $\mathcal{F}$ be an orthogonal basis for $S_k(N,\psi)$ consisting of eigenfunctions
  of $T_n$.  We denote the Hecke eigenvalue by $T_nh=\lambda_n(h)h$, and recall that
  \[a_n(h)=a_1(h)\lambda_n(h).\]
  Our main result is the following.

\begin{theorem}\label{main}
  With notation as above, assume $k>2$, and let $r,n\in \Z^+$ with $(rn,D)=1$.  
  Then for all $s=\sigma+i\tau$ in the strip $1<\sigma<k-1$,
\begin{align}\label{sum}
\frac1{\nu(N)}&
\sum_{h\in\mathcal{F}}
 \frac{\lambda_n(h)\,\ol{a_r(h)}\,\Lambda(s,h,\chi)}{\|h\|^2}\\
 \notag =&\frac{2^{k-1}(2\pi rn)^{k-s-1}}{(k-2)!}\Gamma(s)
  \sum_{d|(n,r)} d^{2s-k+1}\psi(\tfrac nd)\chi(\tfrac{rn}{d^2})
\\
\notag &+ \delta_{N,1}\frac{2^{k-1}(2\pi rn)^{s-1}}{(k-2)!}\Gamma(k-s)\frac{i^k}{D^{2s-k}}
\frac{\tau(\chi)^2}D \sum_{d|(r,n)}d^{k-2s+1} \ol{\chi(\tfrac{rn}{d^2})}\\
\notag &+E,
\end{align}
where $\delta_{N,1}\in\{0,1\}$ is nonzero iff $N=1$,  and
the error term $E$ is an infinite series
  involving confluent hypergeometric functions  (cf. Proposition \ref{deltat})
  satisfying
\begin{equation}\label{Ebound}
|E|\le 2\gcd(r,n)\frac{(4\pi rn)^{k-1}D^{k-\sigma-\frac12}\varphi(D)B(\sigma,k-\sigma)}
   {N^\sigma (k-2)!}
  \cosh(\tfrac{\pi\tau}2)\zeta(k-\sigma)\zeta(\sigma).
\end{equation}
  Here
  $B(x,y)=\tfrac{\Gamma(x)\Gamma(y)}{\Gamma(x+y)}$
  is Euler's Beta function and $\varphi(D)$ is Euler's $\varphi$-function.
\end{theorem}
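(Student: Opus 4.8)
\emph{Overall strategy and the spectral side.} The plan is to realize the left side of \eqref{sum} as the spectral expansion of a relative trace formula on $\GL(2)$ and to evaluate the geometric side in closed form. Three periods produce the summand: for primitive $\chi\bmod D$, the Mellin expansion of $h$ along horocycles gives the additive‑twist identity $\Lambda(s,h,\chi)=\tau(\bar\chi)^{-1}\sum_{\alpha\bmod D}\bar\chi(\alpha)\int_0^\infty h(\tfrac\alpha D+iy)\,y^{s-1}\,dy$, which adelically is, up to an elementary factor, the Hecke–Tate zeta integral $\int_{\Q^{\times}\backslash\A^{\times}}\varphi_h\smat{a}{0}{0}{1}\chi(a)|a|^{s}\,d^{\times}a$ of the adelic lift $\varphi_h$ of $h$; the Fourier coefficient $\overline{a_r(h)}$ is a Whittaker period of $\overline{\varphi_h}$; and $\lambda_n(h)$ is produced by the unramified Hecke operator $T_n$ (via $a_n(h)=a_1(h)\lambda_n(h)$). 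I would then choose a factorizable test function $f=\bigotimes_v f_v$ on $\GL_2(\A)$: at $\infty$ a weight‑$k$ discrete‑series matrix coefficient, so only $S_k$ survives; at $p\mid N$ the characteristic function of the appropriate congruence subgroup, weighted by the nebentypus $\psi$; at $p\mid D$ a vector built from $\chi_p$ matching the local ramification; and at every other $p$ the Hecke kernel attached to $T_n$. Forming $K_f(x,y)=\sum_{\gamma\in\GL_2(\Q)}f(x^{-1}\gamma y)$ and integrating it against $\overline{\psi_r}$ along a unipotent in the $x$‑variable and against $\chi\,|\cdot|^{s}$ along the diagonal torus in the $y$‑variable, uniqueness of Whittaker functionals and of local zeta integrals collapses the spectral side to an explicit elementary constant times $\tfrac1{\nu(N)}\sum_h\tfrac{\lambda_n(h)\overline{a_r(h)}\Lambda(s,h,\chi)}{\|h\|^2}$, with conjugations arranged to match; the constant, which ties the normalization \eqref{petnorm} to the trace‑formula normalization, I would pin down by testing against a single form. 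This step is bookkeeping.

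\emph{The geometric side: the two main terms.} Expanding the same $K_f$ geometrically via the Bruhat decomposition $\GL_2(\Q)=B(\Q)\sqcup B(\Q)wB(\Q)$, the Borel cell — after the torus action on the $y$‑integral is taken into account — contributes finitely many orbits. The identity orbit gives the archimedean integral $\int_0^\infty e^{-2\pi rn\,y}y^{s-1}\,dy=\Gamma(s)(2\pi rn)^{-s}$, which, combined with the Poincar\'e‑type normalization $(4\pi rn)^{k-1}/(k-2)!$ and with the divisor sum $\sum_{d\mid(n,r)}d^{2s-k+1}\psi(n/d)\chi(rn/d^2)$ coming from the $T_n$‑kernel, is exactly the first main term of \eqref{sum}; the $2\gcd(r,n)$ in \eqref{Ebound} likewise reflects this $d\mid(n,r)$ spread. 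A second Borel‑type orbit — in the classical picture, the coset of $\Gamma_\infty\backslash\Gamma_0(N)$ whose lower‑left entry equals $D$ — degenerates the archimedean integral, via $y\mapsto 1/(D^2y)$, into one giving $\Gamma(k-s)$, its $p\mid D$ local factors assemble into $\tfrac{i^k}{D^{2s-k}}\tfrac{\tau(\chi)^2}{D}$, and its remaining arithmetic into $\sum_{d\mid(r,n)}d^{k-2s+1}\overline{\chi(rn/d^2)}$; this is the second main term. Crucially that orbit lies in $\Gamma_0(N)$ only when $N\mid D$, hence — since $(D,N)=1$ — only when $N=1$, which is precisely the factor $\delta_{N,1}$; adelically, the support condition at $p\mid N$ and the ramified condition at $p\mid D$ are simultaneously met only for $N=1$. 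All other Borel orbits vanish because a local integral at $p\mid D$ or $p\mid N$ does.

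\emph{The error term and the main obstacle.} The big Bruhat cell $B(\Q)wB(\Q)$, parametrized by $\GL_1(\Q)$, contributes an infinite sum of regular orbital integrals, each factoring into local pieces; because of the $|\cdot|^{s}$‑twist in the torus integral, the archimedean factor is no longer a $J$‑Bessel function (as in the plain Petersson formula) but a confluent hypergeometric function $\F$ — this is the content of Proposition~\ref{deltat}, and the resulting series is $E$. Bounding $E$ then reduces to inserting the $p\mid D$ local factors (Gauss/Ramanujan‑type sums of size controlled by $\varphi(D)D^{k-\sigma-1/2}$, summed trivially), estimating $\F$ by $B(\sigma,k-\sigma)\cosh(\tfrac{\pi\tau}2)$ through its standard integral representation and Stirling's formula, and summing the remaining Dirichlet series over the denominators $c$ to produce $\zeta(\sigma)\zeta(k-\sigma)$; this yields \eqref{Ebound}. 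The main obstacle is this archimedean computation on the big cell: establishing the exact $\F$‑identity for the regular orbital integral and showing that its degeneration yields the clean $\Gamma(k-s)$ of the second main term — i.e.\ proving Proposition~\ref{deltat} — a computation that amounts to reproving the functional equation \eqref{fe} at the archimedean place, and in which the restriction $1<\sigma<k-1$, needed for absolute convergence of both the $\F$‑integral and the series $\zeta(\sigma)\zeta(k-\sigma)$, is forced.
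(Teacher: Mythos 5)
Your overall strategy coincides with the paper's: the same relative trace formula, with the kernel of a factorizable test function (a weight-$k$ discrete series matrix coefficient at infinity, a $\chi$-twisting function supported on translates $\smat{1}{-m/D}{0}{1}K_1(N)$ at $p\mid D$, convolved with the $T_n$-kernel) integrated against $\theta_r$ along the unipotent and against $\ol{\chi^*(y)}|y|^{s-k/2}$ along the diagonal torus; and the same treatment of the error term, whose archimedean ${}_1\mathrm{F}_1$-factor, $D^{k-\sigma-\frac12}\varphi(D)$ from the $p\mid D$ Gauss-sum data, $B(\sigma,k-\sigma)\cosh(\tfrac{\pi\tau}{2})$ bound, and $\zeta(\sigma)\zeta(k-\sigma)$ from the sum over denominators all match Propositions \ref{deltat} and \ref{bound}. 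Your spectral-side bookkeeping and the evaluation of the identity orbit agree with Proposition \ref{delta1}.

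There is, however, one concrete error in your orbit bookkeeping, and as written that step would fail. The relevant double-coset space is $\ol{M}(\Q)\backslash\olG(\Q)/N(\Q)$, and the Borel cell $M(\Q)N(\Q)$ forms a \emph{single} orbit under this two-sided action: any $mn$ is equivalent to a diagonal matrix, and all diagonal matrices are equivalent modulo the center and $\ol{M}(\Q)$. So there is no ``second Borel-type orbit,'' and a coset of $\Gamma_0(N)$ with nonzero lower-left entry does not lie in the Borel at all. The second main term actually comes from the \emph{degenerate orbit of the big cell}, represented by the Weyl element $w=\smat{0}{-1}{1}{0}$ — precisely the $t=0$ point that your parametrization of $B(\Q)wB(\Q)$ by $\GL_1(\Q)$ silently discards (the full orbit space of the big cell is $\{w\}\cup\{\smat{t}{-1}{1}{0}:t\in\Q^*\}$). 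Moreover the vanishing for $N>1$ is not because ``the orbit lies in $\Gamma_0(N)$ only when $N\mid D$'': in the paper's Proposition \ref{deltaw} the support conditions force the lower-left entry $\ell$ of the translated matrix to satisfy $\ell\in N\Z^+$ and $D\mid\ell$, and any $p\mid N$ then forces $p\mid n$ via the upper-right entry, contradicting $(n,N)=1$ unless $N=1$. The substitution yielding $\Gamma(k-s)$ and the factor $i^kD^{k-2s}\tau(\chi)^2/D$ that you describe are correct, but must be attached to this $w$-orbit. Finally, you correctly identify Proposition \ref{deltat} (the ${}_1\mathrm{F}_1$-evaluation on the regular orbits) as the main labor, but you leave it as an acknowledged obstacle rather than carrying it out, so the proposal is an outline of the paper's argument rather than a complete proof.
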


Theorem \ref{main} extends first moment estimates of many authors.
  For prime level $N$, Duke estimated the average at the central point
  in the case $k=2$ and $r=n=1$, \cite{Du}.
  Akbary extended his result to allow for arbitrary weight $k$
  and summing over newforms (\cite{Ak}), and Kamiya treated
  arbitrary level and weight (with oldforms present)
  and $s$ any point on the critical line, \cite{Ka}.
  These references all make use of Petersson's formula,
  and obtain an error on the order of $O(N^{-k/4+\e})$, whereas \eqref{Ebound}
  is $O(N^{-k/2})$ for $s$ on the critical line.
  Ellenberg has shown how to refine Duke's method to improve the error bound
   to $O(N^{-k/2+\e})$ (\cite{El2}).

 In the case of twisting by a quadratic Dirichlet character when $N=1$,
 a different method was offered by Kohnen and Sengupta.  They
  gave an asyptotic for the average in the weight aspect
  using Waldspurger's formula relating the 
  central twisted $L$-values to certain Fourier coefficients 
  of half-integral weight modular forms, \cite{KS}.

  Here, we prove Theorem \ref{main} 
  by direct computation of a $\GL(2)$ relative trace formula involving integration
  over $N\times T$, where $N$ is unipotent and $T$ is a torus.
  This method was introduced in \cite{petrr}, in which the untwisted case was
  treated.  The incorporation of twisting 
  is achieved with an adelic twisting operator which we define in \S\ref{twist}.
  We have not made any attempt to go further and address the question of
  how many forms give a non-vanishing $L$-value.
  However, we note that by using estimates for mollified first and second moments,
  Iwaniec and Sarnak
  have shown that a positive proportion of cusp forms (in fact 50\% in 
  certain families) have nonvanishing quadratic-twisted central $L$-value,
   \cite{IS1}.
  Results of this nature have been used to bound the ranks of 
  Jacobians of modular curves (cf. \cite{IS}).

  Several authors have investigated first moments of Rankin-Selberg $L$-functions,
  i.e. where $\chi$ in \eqref{sum} is replaced with a fixed
  cusp form $h$ on $\GL(2)$.  In the case where $h$ is dihedral, one can
  do this via a relative trace formula on $T\times T$ (\cite{RR}, \cite{FW}),
  or by appealing to the Gross-Zagier formula (\cite{MR}). 
  Averages for more general $h$ have been studied recently by Nelson, Holowinsky and Templier 
  (level aspect, \cite{N}, \cite{HT}) and by Li and Masri (weight aspect, \cite{LM}).
  In several of the above references, a ``hybrid" subconvexity result is obtained,
  valid for forms whose level is in some range depending on the level of
  the fixed form $h$.
  Unfortunately, an analogous hybrid subconvexity result (in $N$ and $D$)
  is not possible in the present paper because of the poor control 
  of the $D$-aspect in \eqref{Ebound}.

An immediate application of Theorem \ref{main} is the nonvanishing of 
  $L$-functions:
 
\begin{corollary}\label{cor}
Suppose $N>1$ and $\gcd(r,n)=1$.
    Then for any $s$ in the critical strip $\tfrac{k-1}2<\Re(s)<\tfrac{k+1}2$,
  the sum \eqref{sum} is nonzero as long as $N+k$ is sufficiently large.
\end{corollary}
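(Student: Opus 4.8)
The plan is to deduce the corollary directly from Theorem \ref{main}: under the stated hypotheses the second (``$\delta_{N,1}$'') term is absent and the first term is a single nonzero quantity, so one only has to check that it dominates the error $E$ once $N+k$ is large.

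First I would record the consequences of the hypotheses. Since $N>1$, the line carrying $\delta_{N,1}$ in \eqref{sum} vanishes, so the left side of \eqref{sum} equals $M+E$, where $M$ is the first term on the right. Since $\gcd(r,n)=1$, the divisor sum defining $M$ collapses to its $d=1$ term, so
\[
M=\frac{2^{k-1}(2\pi rn)^{k-s-1}}{(k-2)!}\,\Gamma(s)\,\psi(n)\chi(rn).
\]
As $(n,N)=1$ and $(rn,D)=1$, both $\psi(n)$ and $\chi(rn)$ are roots of unity, so writing $s=\sigma+i\tau$ one gets the clean positive lower bound
\[
|M|=\frac{2^{k-1}(2\pi rn)^{k-\sigma-1}}{(k-2)!}\,\bigl|\Gamma(\sigma+i\tau)\bigr|.
\]
Since $|M+E|\ge|M|-|E|$, everything reduces to checking $|E|<|M|$, with $|E|$ controlled by \eqref{Ebound}.

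Next I would form $|E|/|M|$. The $(k-2)!$'s cancel, $\gcd(r,n)=1$, and the powers of $4\pi rn$ and $2\pi rn$ combine via $(4\pi rn)^{k-1}=2^{k-1}(2\pi rn)^{k-\sigma-1}(2\pi rn)^{\sigma}$, leaving
\[
\frac{|E|}{|M|}\le \frac{2(2\pi rn)^\sigma D^{k-\sigma-\frac12}\varphi(D)\,B(\sigma,k-\sigma)\,\cosh(\tfrac{\pi\tau}2)\,\zeta(\sigma)\zeta(k-\sigma)}{N^\sigma\,|\Gamma(\sigma+i\tau)|}.
\]
Substituting $B(\sigma,k-\sigma)=\Gamma(\sigma)\Gamma(k-\sigma)/\Gamma(k)$ and using $|\Gamma(\sigma+i\tau)|\ge\Gamma(\sigma)e^{-\tau^2/\sigma}$ (from the Weierstrass product for $\Gamma$), the $\Gamma(\sigma)$ factors cancel, and up to a constant depending only on $D$ and on a fixed bounded range for $\tau$ the bound becomes
\[
\frac{|E|}{|M|}\ \ll\ \Bigl(\tfrac{2\pi rn}{N}\Bigr)^{\!\sigma}\,\frac{D^{k-\sigma}\,\Gamma(k-\sigma)}{\Gamma(k)}\,\zeta(\sigma)\zeta(k-\sigma).
\]
I would then finish by two cases. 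If $k$ stays bounded and $N\to\infty$, then $\sigma\in(\tfrac{k-1}2,\tfrac{k+1}2)$, a fixed interval inside $(1,\infty)$ since $k\ge3$, so every factor is bounded and $(2\pi rn/N)^\sigma\to0$. If $k\to\infty$, then $\sigma$ and $k-\sigma$ lie within $\tfrac12$ of $k/2$, so $\zeta(\sigma),\zeta(k-\sigma)\to1$, and by Stirling $\Gamma(k-\sigma)/\Gamma(k)$ decays like $(C/k)^{k/2}$; since $D^{k-\sigma}=D^{k/2+O(1)}$ and $(2\pi rn/N)^\sigma=(2\pi rn/N)^{k/2+O(1)}$, the product is $\ll\bigl(\tfrac{C'rnD}{Nk}\bigr)^{k/2}=o(1)$ once $k$ exceeds $C'rnD/N$. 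In both cases $|E|/|M|\to0$, so for $N+k$ large we have $|E|<|M|$ and \eqref{sum} is nonzero.

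The main obstacle is the $k\to\infty$ case: one must verify that the super-exponential decay of $\Gamma(k-\sigma)/\Gamma(k)$ really does swamp the exponentially growing factors $(2\pi rnD)^{\sigma}$ coming from the archimedean and Gauss-sum terms, and one must keep the lower bound for $|\Gamma(\sigma+i\tau)|$ usable as $\sigma\to\infty$ — which is the reason it is natural to keep $\tau=\Im(s)$ bounded, so that $\cosh(\tfrac{\pi\tau}2)/|\Gamma(\sigma+i\tau)|$ stays under control. A minor caveat is the boundary $k=3$ of the admissible range, where $\zeta(\sigma)$ or $\zeta(k-\sigma)$ can be large near $\Re(s)=1$; this is harmless, since for a fixed $s$ in the open strip these are constants and the bounded-$k$ case already covers it.
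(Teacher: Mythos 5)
Your argument is correct and follows essentially the same route as the paper: with $N>1$ and $\gcd(r,n)=1$ the sum reduces to a single main term of modulus $\tfrac{2^{k-1}(2\pi rn)^{k-\sigma-1}}{(k-2)!}|\Gamma(s)|$ plus $E$, and one shows $|E|/|M|\to 0$ via \eqref{Ebound} and Stirling, exactly as in \S\ref{Asymp}. The only (harmless) difference is bookkeeping: the paper simply bounds $B(\sigma,k-\sigma)\le 1$ and applies Stirling to $\Gamma(s)^{-1}$, whereas you cancel $\Gamma(\sigma)$ inside the Beta function against a lower bound for $|\Gamma(\sigma+i\tau)|$ before applying Stirling to $\Gamma(k-\sigma)/\Gamma(k)$ — both yield the same $(C(D,r,n)/Nk)^{k/2}$-type decay.
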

\noindent (See \S\ref{Asymp}, where the required size of $N+k$ as a function of $D$
  and $s$ may be ascertained.)
This can be interpreted as a GRH-on-average
  for the twisted $L$-functions, though
  no distinction is made between points on and off the critical line.
  When $N=1$, we cannot prove nonvanishing
  on the critical line $\Re(s)=\tfrac k2$, since the first two terms have the same
  magnitude there. Indeed, they cancel out at $s=\tfrac k2$ if $\chi$ is quadratic and
  conditions on $k,D$ conspire in \eqref{fe} to force the $L$-functions to vanish.
  However, by arguments given in \cite{petrr} one can show that when 
  $N=1$ and $\Re(s)\neq \tfrac k2$, the sum \eqref{sum} is nonzero if $k$ is sufficiently large.
  
  According to the generalized Lindel\"of hypothesis, for a newform $h$ we have
\begin{equation}\label{LH}
L(\tfrac k2,h,\chi)\ll (D^2Nk)^\e.
\end{equation}
  Let $\mathcal{F}_k(N)^{\text{new}}$ be any orthogonal basis for the span 
  $S_k(N)^{\text{new}}$ of the newforms with trivial character.
   Using the fact (\cite{Serre}, p. 86) 
  that $\dim S_k(N)^{\text{new}}\sim \frac{k-1}{12}\nu(N)^{\text{new}}$
  where $N^{1-\e}\ll\nu(N)^{\text{new}}\le N$, \eqref{LH} implies
  the following ``averaged" Lindel\"of hypo\-thesis:
\begin{equation}\label{LHave}
\sum_{h\in \mathcal{F}_k(N)^{\text{new}}}L(\tfrac k2,h,\chi)\ll D^\e(Nk)^{1+\e}.
\end{equation}
(This bound can only be expected to be an accurate prediction of the magnitude of the sum
  when the $L$-values are nonnegative.)

One can use Theorem \ref{main} to prove certain instances of \eqref{LHave} 
  unconditionally, although
  from \eqref{Ebound} it is clear that we cannot achieve adequate bounds
  in the $D$-aspect.  
  The idea is to set $n=r=1$ in \eqref{sum} and use well-known bounds for
  the Petersson norm, along with positivity of the $L$-values when $\chi$ is real.
  If oldforms are present, the method apparently grinds to a halt because
  $\frac{\ol{a_1(h)}\Lambda(\frac k2,h,\chi)}{\|h\|^2}$ may be negative.  Indeed,
  even in the simplest case where $N=p$ is prime, if $h$ is a newform of level $1$,
  and $h_p$ is a nonzero basis element (unique up to scaling) orthogonal to $h$ 
  in $\Span\{h(z),h(pz)\}$, then using \cite{ILS} (2.45)-(2.46)
   it is not hard to show that
\begin{equation}\label{hp}
\ol{a_1(h_p)}\Lambda(\tfrac k2,h_p,\chi)= \mu\left(\frac{\lambda_p(h)^2}{(p+1)^2}
 -\frac{\lambda_p(h)\chi(p)}{p^{1/2}(p+1)}\right)\Lambda(\tfrac k2,h,\chi)
\end{equation}
  for some constant $\mu>0$ depending on the choice of $h_p$.\footnote{By
  \cite{ILS} (2.45),
  $h_p=w\left(-\frac{\lambda_p(h)}{p+1}h(z)+p^{\frac{k-1}2}h(pz)\right)$
 for some nonzero $w\in\C$; by (2.46),
  $a_1(h_p)=-w\frac{\lambda_h(p)}{p+1}$; by an easy manipulation,
  $L(s,h(pz),\chi)=\frac{\chi(p)}{p^s}L(s,h,\chi)$, so $L(\tfrac{k}2,h_p,\chi)
  =w\left(-\frac{\lambda_p(h)}{p+1}
  +\frac{\chi(p)}{p^{1/2}}\right)L(\frac k2,h,\chi)$. Then \eqref{hp} holds with 
 $\mu=|w|^2$.}
  The above can clearly be negative, for example if
  $\chi(p)=1$ and the real eigenvalue $\lambda_p(h)$ is close 
  to $1$.
Therefore we have to content ourselves here with cases 
  in which oldforms are not present.  We highlight
  two such cases, one in the $k$-aspect and one in the $N$ aspect, though the
  proof applies more generally.

\begin{corollary} Let $\mathcal{F}_k(1)$ be an orthogonal basis for $S_k(1)$
  consisting of newforms, normalized with first Fourier coefficient equal to $1$.
  Then for any real primitive Dirichlet character $\chi$,
\begin{equation}\label{LHk}
\sum_{h\in\mathcal{F}_k(1)}L(\tfrac k2,h,\chi) \ll_{\e,D} k^{1+\e}.
\end{equation}
Let $4\le k_0\le 14$ be an even integer not equal to $12$, let $N$ be a prime not dividing 
 the conductor of $\chi$,
  and let $\mathcal{F}_{k_0}(N)^{\text{new}}$ be an orthogonal basis for
  $S_{k_0}(N)^{\text{new}}= S_{k_0}(N)$ consisting of normalized newforms.  Then
\begin{equation}\label{LHN}
\sum_{h\in\mathcal{F}_{k_0}(N)^{\text{new}}}L(\tfrac {k_0}2,h,\chi) \ll_{\e,D} N^{1+\e}.
\end{equation}
\end{corollary}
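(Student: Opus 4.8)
The plan is to specialize Theorem~\ref{main} to $n=r=1$, $s=k/2$, $\psi$ trivial, and then to remove the Petersson weights using positivity of the central values. For an orthogonal basis of newforms normalized so that $a_1(h)=1$ one has $\ol{a_1(h)}=\lambda_1(h)=1$, so the left side of \eqref{sum} becomes $\tfrac1{\nu(N)}\sum_h\Lambda(\tfrac k2,h,\chi)/\|h\|^2$; multiplying through by $(2\pi)^{k/2}/\Gamma(\tfrac k2)$ converts $\Lambda$ into $L$. In both parts of the corollary $\chi$ is real and primitive, so $\tau(\chi)^2=\chi(-1)D$, and there are no oldforms, so the basis $\mathcal F$ of Theorem~\ref{main} may be taken to be exactly the newform basis in the statement.

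For \eqref{LHk}, $N=1$ and $\nu(1)=1$; since $S_k(1)\neq0$ the weight $k$ is even, so $i^k=(-1)^{k/2}$. At $s=k/2$ the factor $D^{2s-k}$ equals $1$ and the only divisor of $(n,r)$ is $1$, so the two main terms of \eqref{sum} coalesce and, after the normalization above,
\[
\sum_{h\in\mathcal F_k(1)}\frac{L(\tfrac k2,h,\chi)}{\|h\|^2}=\frac{(4\pi)^{k-1}}{(k-2)!}\bigl(1+(-1)^{k/2}\chi(-1)\bigr)+\frac{(2\pi)^{k/2}}{\Gamma(\tfrac k2)}E,
\]
the parenthetical factor being $0$ or $2$. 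Putting $\sigma=k/2$, $\tau=0$, $N=r=n=1$ and $B(\tfrac k2,\tfrac k2)=\Gamma(\tfrac k2)^2/\Gamma(k)$ into \eqref{Ebound}, and using $\Gamma(k)/\Gamma(\tfrac k2)\ge(k/2)^{k/2}$, the renormalized error is $\ll_D (4\pi D/k)^{k/2}(4\pi)^{k-1}/(k-2)!$, which is $\ll_D (4\pi)^{k-1}/(k-2)!$ uniformly in $k$. Hence $\sum_h L(\tfrac k2,h,\chi)/\|h\|^2\ll_D (4\pi)^{k-1}/(k-2)!$. Now, since $\chi$ is real, every central value $L(\tfrac k2,h,\chi)$ is non-negative --- this is the non-negativity of central values of quadratic twists of holomorphic newforms, a consequence of Waldspurger's theorem, explicit in work of Kohnen--Zagier and Guo --- so $\sum_h L(\tfrac k2,h,\chi)\le(\max_h\|h\|^2)\sum_h L(\tfrac k2,h,\chi)/\|h\|^2$. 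The well-known bounds for the Petersson norm give $\|h\|^2\ll_\e k^\e\,\Gamma(k)/(4\pi)^k\ll_\e k^{1+\e}(k-2)!/(4\pi)^{k-1}$, and multiplying the last two estimates yields $\sum_h L(\tfrac k2,h,\chi)\ll_{\e,D}k^{1+\e}$, which is \eqref{LHk}.

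For \eqref{LHN}, $k=k_0$ is fixed and $N$ is prime with $(N,D)=1$. Now $N>1$, so $\delta_{N,1}=0$: the second main term of \eqref{sum} drops out and only the constant $\tfrac{2^{k_0-1}(2\pi)^{k_0/2-1}}{(k_0-2)!}\Gamma(\tfrac{k_0}2)$ survives, while \eqref{Ebound} gives $|E|\ll_D N^{-k_0/2}\le N^{-2}$. Therefore $\tfrac1{\nu(N)}\sum_h\Lambda(\tfrac{k_0}2,h,\chi)/\|h\|^2\ll_D 1$, i.e.\ $\sum_h L(\tfrac{k_0}2,h,\chi)/\|h\|^2\ll_D\nu(N)\asymp N$. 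Because $k_0\in\{4,6,8,10,14\}$ there are no nonzero level-one cusp forms of weight $k_0$, hence no oldforms, confirming $S_{k_0}(N)^{\text{new}}=S_{k_0}(N)$; with the normalization \eqref{petnorm} the Petersson norm of a newform of prime level $N$ satisfies $\|h\|^2\ll_\e N^\e$. The same positivity argument as above then gives $\sum_h L(\tfrac{k_0}2,h,\chi)\le(\max_h\|h\|^2)\sum_h L(\tfrac{k_0}2,h,\chi)/\|h\|^2\ll_{\e,D}N^\e\cdot N=N^{1+\e}$.

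The trace-formula input is automatic once Theorem~\ref{main} is in hand; the real crux is the appeal to non-negativity of the central values, which is exactly what forces $\chi$ to be real and is the only ingredient beyond the theorem and classical estimates. The other delicate point is the arithmetic of the Gamma-factors: the averaged ratio $\sum_h L(\tfrac k2,h,\chi)/\|h\|^2$ is super-exponentially small in the weight while $\max_h\|h\|^2$ is super-exponentially large, and one must check that these balance exactly to leave the Lindel\"of-quality powers $k^{1+\e}$ and $N^{1+\e}$. Finally, the restriction to the oldform-free cases of the corollary is essential: as the discussion preceding the corollary shows, in the presence of oldforms the ratios $\ol{a_1(h)}\Lambda(\tfrac k2,h,\chi)/\|h\|^2$ can be negative, so pulling out $\max_h\|h\|^2$ is no longer legitimate and the method stalls.
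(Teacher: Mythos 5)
Your argument is correct and follows essentially the same route as the paper: specialize Theorem \ref{main} to $n=r=1$, $s=k/2$, invoke Guo's non-negativity of the central values for real $\chi$ to replace $\|h\|^{-2}$ by $\max_h\|h\|^2$ times the weighted sum, use the Iwaniec--Michel Petersson-norm bound, and check via Stirling that the renormalized error term is negligible. The only cosmetic difference is that you carry out the Stirling estimate inline rather than deferring it to the asymptotics section, and you make the cancellation of the $\Gamma$- and factorial-factors explicit; the substance is identical.
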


\noindent{\em Remarks:} The estimate \eqref{LHk} was first proven by
  Kohnen and Sengupta by different means (\cite{KS}).
  The case of trivial $\chi$ was proven earlier by Sengupta by essentially 
  the same method we use here (\cite{Se}).  The analog of \eqref{LHN} for 
  the second moment was
  established by Fomenko in case of trivial $\chi$ (\cite{Fo}).
  (By Cauchy-Schwarz, the estimate \eqref{LHN} is a consequence of its second
  moment analog.)

\begin{proof}
  In Theorem \ref{main}, 
  suppose that the central character $\w'$ is trivial and that $\chi$ is real.
  We assume that there are no oldforms, so $\mathcal{F}$ can be chosen to 
  consist of newforms $h$, normalized with $a_1(h)=1$.
  By the hypotheses on $\w'$ and $\chi$, we have $L(\frac k2,h,\chi)\ge 0$
  for all $h\in \mathcal{F}$ (\cite{Gu}).  Furthermore, we have the bound
\[\frac{(4\pi)^{k-1}}{(k-2)!}\nu(N)\|h\|^2\ll (kN)^{1+\e}\]
  for all newforms $h\in\mathcal{F}$ (see (2.29) of \cite{IM}).
  Therefore due to the nonnegativity of the $L$-values, we have
\begin{align*}
\sum_{h\in\mathcal{F}}L(\tfrac k2,h,\chi)\ll &\frac{(kN)^{1+\e}(k-2)!}{(4\pi)^{k-1}\nu(N)}
  \sum_{h\in\mathcal{F}}\frac{L(\frac k2,h,\chi)}{\|h\|^2}\\
  &=\frac{(kN)^{1+\e}(k-2)!}{2^{k-1}(2\pi)^{k/2-1}\Gamma(\frac k2)\nu(N)}\sum_{h\in\mathcal{F}}
  \frac{\Lambda(\frac k2,h,\chi)}{\|h\|^2}.
\end{align*}
Applying the theorem with $n=r=1$, we immediately obtain
\[\sum_{h\in\mathcal{F}}L(\tfrac k2,h,\chi)\ll (kN)^{1+\e}
  \left(1+\delta_{N,1}\frac{i^k\tau(\chi)^2}{D}
  + \frac{(k-2)!}{2^{k-1}(2\pi)^{k/2-1}\Gamma(\frac k2)}E\right).\]
It is clear from \eqref{Ebound} that the third term in the parentheses
  tends to $0$ as $N\to \infty$.  Using Stirling's approximation, it is
  not hard to show that the same is true as $k\to\infty$ (see \S \ref{Asymp} for
  details), and the corollary follows.
\end{proof}

\vskip .2cm
\noindent{\bf Acknowledgements.}  The first author was supported by
  a Chase Distinguished Research Assistantship from the 
  University of Maine Graduate School.  The third
  section of this paper is extracted from her Master's thesis.
  Both authors were supported by NSF grant DMS 0902145.
  We would like to thank the referee for carefully reading the manuscript 
  and offering insightful suggestions for improved exposition.

\section{Notation and preliminaries}

 Let $\A$, $\Af$ be the adeles and finite adeles of $\Q$, respectively. 
Fix a positive integer $N$.  For $x\in \A^*$, we let $x_N$
denote the idele whose $p$-th component is $x_p$ for all $p|N$ and $1$ for all
  $p\nmid N$.  For any integer $d$, we also write $d_p=\ord_p(d)$ (the $p$-adic
  valuation of $d$).  It should be
  clear from the context which meaning we take when a subscript $p$ appears.

Let $\psi$ be a Dirichlet character modulo $N$, extended to $\Z$ by
  $\psi(d)=0$ if $(d,N)>1$.  We let
$\psi^*$ denote its adelic counterpart (a Hecke character), defined via strong
  approximation $\A^*=\Q^*(\R^+\times \Zhat^*)$ by the pullback
\begin{equation}\label{pb}
\psi^*: \A^*\longrightarrow \Zhat^*\longrightarrow (\Z/N\Z)^*\longrightarrow\C^*,
\end{equation}
where the first arrows are the canonical projections, and the last arrow is $\psi$.
We drop the * from the notation for the local constituents.  Thus 
  $\psi_p: \Q_p^*\rightarrow \C^*$ is
  given by restricting $\psi^*$ to the embedded image of
  $\Q_p^*$ in $\A^*$.  
Note that if $d$ is an integer prime to $N$, then
\begin{equation}\label{psid}
\psi(d)=\prod_{p|N}\psi_p(d)=\psi^*(d_N).
\end{equation}
Later we will consider a character $\chi$ of modulus $D$, and all of the 
  above notation will apply equally with $D$ in place of $N$.

We let $\theta:\A\longrightarrow\C^*$ denote the standard character of $\A$,
  given locally by
\[\theta_p(x)=\begin{cases}e^{-2\pi i x}&\text{if }p=\infty \quad (x\in \R)\\
e^{2\pi i r_p(x)}&\text{if }p<\infty \quad (x\in \Q_p),\end{cases}\]
where $r_p(x)\in \Q$ is the $p$-principal part of $x$, a number with $p$-power 
  denominator characterized up to $\Z$
  by $x\in r_p(x)+\Z_p$.  The global character $\theta=\prod_{p\le \infty}\theta_p$
  is then trivial on $\Q$, and for finite $p$,
  $\theta_p$ is trivial precisely on $\Z_p$.  For $r\in \Q$, we define
\begin{equation}\label{thetar}
\theta_r(x)=\theta(-rx)=\ol{\theta(rx)}.
\end{equation}
  Every character of $\Q\bs \A$ is of the form $\theta_r$ for some $r\in \Q$.

Let $G$ denote the algebraic group $\GL_2$, with center $Z$, and let $\olG$ denote
  $G/Z$.
  The group $G(\Af)$ has the following sequences of open compact subgroups of $K=G(\Zhat)$:
\[K_0(N)=\{\smat abcd\in K|\, c\in N\Zhat\}\]
\[K_1(N)=\{\smat abcd\in K_0(N)|\, d\equiv 1\mod N\Zhat\}.\]
Because $\det K_0(N)=\det K_1(N)=\Zhat^*$, strong approximation holds for both of
  these, and in particular,
\begin{equation}\label{sa}
G(\A)=G(\Q)(G(\R)^+\times K_1(N)).
\end{equation}

Let $L^2(\psi^*)=L^2(\olG(\Q)\bs\olG(\A),\psi^*)$ be the space of measurable $\C$-valued
  functions $\phi$ on $G(\A)$ satisfying $\phi(z\g g)=\psi^*(z)\phi(g)$ for all 
  $z\in Z(\A), \g\in G(\Q), g\in G(\A)$, and which are square integrable over
  $\olG(\Q)\bs\olG(\A)$.  Let $L^2_0(\psi^*)$ denote the subspace of cuspidal functions.

We now normalize Haar measure on each group of interest.  Everything is the same
  as in \S 7 of \cite{KL}, where more detail is given.  On $\R$ we take Lebesgue
  measure $dx$, and we take $\tfrac{dy}{|y|}$ on $\R^*$.  We normalize
  the additive measure $dx$ on $\Q_p$ by taking $\meas(\Z_p)=1$, and likewise $d^*y$ on
  $\Q^*_p$ is normalized by $\meas(\Z_p^*)=1$.  These choices determine
  Haar measures on $\A$ and $\A^*$ in the usual way, with the property that
  $\meas(\A/\Q)=1$.  We give the compact
  abelian group $K_\infty=\SO(2)$ the measure $dk$ of total length $1$,
  and use the above measures
  to define measures on $N(\R)=\{\smat 1x01\}\cong \R$ and
  $M(\R)=\{\smat{y}{}{}z\}\cong \R^*\times \R^*$.  These choices determine 
  a Haar measure on $G(\R)$ by the Iwasawa decomposition: $dg=d(mnk)= dm\,dn\,dk$.
  In the same way, our fixed measures on $\Q_p$, $\Q_p^*$ determine measures
   on $N(\Q_p)$ and $M(\Q_p)$ respectively.  We take the
  unique measure on $G(\Q_p)$ for which the open compact subgroup
  $K_p=G(\Z_p)$ has measure $1$.  Let $Z$ denote the center of $G$ and set $\olG=G/Z$.
  We take $\meas(\ol{K_p})=1$ in $\olG(\Q_p)$.
  On $\olG(\R)$ we take the measure $d\ol m \, dn\,dk$, where $d\ol m$
  is the measure $\tfrac{dy}{|y|}$ on $\ol{M}(\R)\cong \{\smat y{}{}1\}\cong \R^*$.
  These local measures determine a
  Haar measure on $\olG(\A)$ for which $\meas(\olG(\Q)\bs\olG(\A))=\pi/3$.

Having fixed the measure, we note the following.
\begin{lemma}Let $D>0$ and let $\chi$ be a Dirichlet character modulo $D$ (not necessarily
  primitive), with Gauss sum $\tau(\chi)$ as in \eqref{tau}.  Let
  $\chi^*$ be the adelic realization of $\chi$ as in \eqref{pb}.
Then for any integer $n$ prime to $D$,
\begin{equation}\label{gauss}
\int_{\Zhat^*}\chi^*(u)\theta_{\fin}(\tfrac{nu}D)d^*u 
  = \frac{\ol{\chi(n)}}{\varphi(D)}\tau(\chi),
\end{equation}
where $\varphi$ is Euler's $\varphi$-function and $\theta_{\fin}=\prod_{p<\infty}\theta_p$.
\end{lemma}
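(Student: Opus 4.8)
The plan is to descend the adelic integral to the finite group $(\Z/D\Z)^*$ and to recognize the resulting exponential sum as a Gauss sum.

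First I would observe that both factors in the integrand factor through the reduction map $\Zhat^* \to (\Z/D\Z)^*$. For $\chi^*$ this is immediate from its definition \eqref{pb}. For the twist, if $u \equiv u' \pmod{D\Zhat}$ then $\tfrac{n(u-u')}{D}\in\Zhat$, on which every local $\theta_p$, and hence $\theta_{\fin} = \prod_{p<\infty}\theta_p$, is trivial; thus $\theta_{\fin}(\tfrac{nu}D)$ depends only on the class of $u$ modulo $D\Zhat$. Our measure normalization gives $\meas(\Zhat^*) = \prod_p\meas(\Z_p^*) = 1$, and the fibers of the reduction map, being cosets of its (open) kernel, all have measure $\tfrac1{\varphi(D)}$. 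Hence
\[
\int_{\Zhat^*}\chi^*(u)\theta_{\fin}(\tfrac{nu}D)\,d^*u = \frac1{\varphi(D)}\sum_{\substack{a \bmod D\\(a,D)=1}}\chi(a)\,\theta_{\fin}(\tfrac{na}D),
\]
where $a$ now runs over integer representatives prime to $D$.

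Next I would compute $\theta_{\fin}(\tfrac{na}D)$ by regarding $\tfrac{na}D$ as a rational number embedded diagonally in $\A$: since $\theta = \prod_{p\le\infty}\theta_p$ is trivial on $\Q$, we get $\theta_{\fin}(\tfrac{na}D) = \theta_\infty(\tfrac{na}D)^{-1} = e^{2\pi i na/D}$. Substituting this in, and then replacing $a$ by $n^{-1}b$ — a permutation of $(\Z/D\Z)^*$ since $(n,D) = 1$, under which $\chi(a) = \ol{\chi(n)}\chi(b)$ and $na\equiv b \pmod D$ — the sum becomes $\ol{\chi(n)}\sum_{b}\chi(b)e^{2\pi i b/D} = \ol{\chi(n)}\tau(\chi)$, which is exactly \eqref{gauss}.

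The one point requiring care is the passage in the second step from the finite idele $\tfrac{nu}D$ with $u\in\Zhat^*$ to the rational number $\tfrac{na}D$: one must check that $\tfrac{n(u-a)}D$ lands in $\Zhat$ (not merely in $\Af$), which uses that $a$ is an integer congruent to $u$ modulo $D\Zhat$ together with the fact that $D\in\Z_p^*$ for $p\nmid D$. Everything else — the two substitutions and the fiber-measure count — is routine.
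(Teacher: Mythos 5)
Your proof is correct and follows essentially the same route as the paper's: both reduce the adelic integral to a sum over $(\Z/D\Z)^*$ using invariance under the measure-$\varphi(D)^{-1}$ kernel of reduction mod $D\Zhat$, evaluate $\theta_{\fin}$ on rationals via triviality of $\theta$ on $\Q$, and finish with the substitution $a\mapsto n^{-1}b$ giving the Gauss sum identity. Your explicit check that $\tfrac{n(u-a)}{D}\in\Zhat$ and the remark on where $(n,D)=1$ enters are fine points the paper leaves implicit.
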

\begin{proof}
  The integrand in \eqref{gauss} is invariant under the subgroup
\[U_D =(1+D\Zhat)\cap \Zhat^*=\prod_{p|D}(1+D\Z_p)\prod_{p\nmid D}\Z_p^*.\]
  Note that $\Zhat^*/U_D\cong(\Z/D\Z)^*$, so $\meas(U_D)=\varphi(D)^{-1}$.
  Therefore
\[
\int_{\Zhat^*}\chi^*(u)\theta_{\fin}(\tfrac{nu}D)d^*u = \frac1{\varphi(D)}
  \sum_{m\in(\Z/D\Z)^*} \chi^*(m_D)\theta_{\fin}(\tfrac{nm}D)
\]
\[=\frac1{\varphi(D)}\sum_{m\mod D} \chi(m)e^{2\pi i nm/D}
  =\ol{\chi(n)}\frac{\tau(\chi)}{\varphi(D)}.\qedhere\]
\end{proof}

We let
\[G(\R)^+=\{g\in G(\R)|\, \det g>0\}.\]
For $g=\smat abcd\in G(\R)^+$, we set
\[j(g,z)=\det(g)^{-1/2}(cz+d).\]
Recall that $j(g_1g_2,z)=j(g_1,g_2z)j(g_2,z)$.
The group action of $G(\R)^+$ on the complex upper half-plane $\mathbf{H}$
  by linear fractional transformations extends to a right action on the space of functions
$h:\mathbf H\longrightarrow\C$ via the weight $k$ slash operator
\[h|_g(z)=j(g,z)^{-k}h(g(z)) \qquad (g\in G(\R)^+, z\in \mathbf{H}).\]
Fix a Dirichlet character $\psi$ of modulus $N$, a positive integer $k$ satisfying
\begin{equation}\label{k}\psi(-1)=(-1)^k,
\end{equation}
  and let $S_k(N,\psi)$
   denote the space of cusp forms of level $N$, weight $k$, and
  character ${\psi}$.  Thus $h\in S_k(N,\psi)$ satisfies
\begin{equation}\label{Skdef}
h|_{\smat abcd} ={\psi(d)}\, h
\end{equation}
for all $\smat abcd\in \Gamma_0(N)$ and $z\in \mathbf H$.

The adelization of $h$ is the function $\phi_h\in L^2_0(\ol{\psi^*})$ defined
  using strong approximation \eqref{sa} by
\begin{equation}\label{phihdef}
\phi_h(\g(g_\infty\times g_{\fin}))= j(g_\infty,i)^{-k}h(g_\infty(i))
\end{equation}
for $\g\in G(\Q)$, $g_\infty\in G(\R)^+$, and $g_{\fin}\in K_1(N)$.
The modularity of $h$ makes $\phi_h$ well-defined, and one checks readily
  that the central character is indeed $\ol{\psi^*}$ (see e.g. the proof of Proposition
  4.5 of \cite{ftf};  the complex conjugate is needed here because we have not included
  it in \eqref{Skdef}.)
With the choice of Haar measure on $\olG(\A)$ given above, and the normalization
  \eqref{petnorm},
   the map $h\mapsto \phi_h$ is an isometry, i.e. $\|h\|=\|\phi_h\|$ (cf. (12.20) of
  \cite{KL}).

We recall the meaning of the following ``period integrals".
\begin{lemma}\label{basic}
For $r\in\Q$, and $h\in S_k(N,\psi)$,
\[\int_{\Q\bs\A}\phi_h(\smat{1}{x}{}{1})\,\ol{\theta_r(x)}dx=
  \left\{\begin{array}{cl}e^{-2\pi r}a_r(h)&\text{if }r\in \Z^+,\\
  0 &\text{otherwise}\end{array}\right.\]
and 
\[\int_{\Q^*\bs\A^*}\phi_h(\smat{y}{}{}1)\,|y|^{s-k/2}d^*y=\Lambda(s,h).\]
\end{lemma}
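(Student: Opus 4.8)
The plan is to pull both period integrals back to the complex upper half-plane using the definition \eqref{phihdef} of the adelization $\phi_h$, and then to recognize the resulting archimedean integrals from the $q$-expansion $h(z)=\sum_{n>0}a_n(h)e^{2\pi inz}$.

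First I would fix fundamental domains. Since $\A=\Q+(\R\times\Zhat)$ with $\Q\cap(\R\times\Zhat)=\Z$, the quotient $\Q\bs\A$ is measurably identified with $[0,1)\times\Zhat$; and since $\A^*=\Q^*(\R^+\times\Zhat^*)$ with $\Q^*\cap(\R^+\times\Zhat^*)=\{1\}$, the quotient $\Q^*\bs\A^*$ is identified with $\R^+\times\Zhat^*$. With our normalizations $\meas(\Zhat)=\meas(\Zhat^*)=1$ these carry the quotient measures to $dt\,du$ and $\tfrac{dy}{y}\,d^*u$. For a representative $x$ of $\Q\bs\A$ with $x_{\fin}\in\Zhat$, the matrix $\smat1x{}1$ factors as $\smat1{x_\infty}{}1\in G(\R)^+$ times $\smat1{x_{\fin}}{}1\in K_1(N)$, so \eqref{phihdef} with $\g=1$ gives $\phi_h(\smat1x{}1)=j(\smat1{x_\infty}{}1,i)^{-k}h(i+x_\infty)=h(i+x_\infty)$, the $j$-factor being $1$. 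Likewise, for a representative $y$ with $y_\infty>0$ and $y_{\fin}\in\Zhat^*$, the matrix $\smat y{}{}1$ factors as $\smat{y_\infty}{}{}1\in G(\R)^+$ times $\smat{y_{\fin}}{}{}1\in K_1(N)$; since $j(\smat{y_\infty}{}{}1,i)=y_\infty^{-1/2}$ and $\smat{y_\infty}{}{}1(i)=iy_\infty$, this gives $\phi_h(\smat y{}{}1)=y_\infty^{k/2}h(iy_\infty)$, and moreover $|y|=y_\infty$ because every finite component of $y$ is a unit.

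For the first identity I would write $\ol{\theta_r(x)}=\theta(rx)=e^{-2\pi irx_\infty}\,\theta_{\fin}(rx_{\fin})$ via \eqref{thetar}. If $r\notin\Z$ then $u\mapsto\theta_{\fin}(ru)$ is a nontrivial character of the compact group $\Zhat$, so the $u$-integral already vanishes. If $r\in\Z$ that factor is identically $1$ on $\Zhat$ and the integral collapses to $\int_0^1h(i+t)e^{-2\pi irt}\,dt$; substituting $h(i+t)=\sum_{n>0}a_n(h)e^{-2\pi n}e^{2\pi int}$ and integrating term by term isolates $n=r$, giving $e^{-2\pi r}a_r(h)$ for $r\in\Z^+$ and $0$ for $r\le0$ (there is no such Fourier coefficient). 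For the second identity, the reduction above turns the integral into $\int_0^\infty y^{k/2}h(iy)\,y^{s-k/2}\tfrac{dy}{y}=\int_0^\infty h(iy)\,y^{s-1}\,dy$, and substituting the $q$-expansion together with $\int_0^\infty e^{-2\pi ny}y^{s-1}\,dy=\Gamma(s)(2\pi n)^{-s}$ yields $(2\pi)^{-s}\Gamma(s)\sum_{n>0}a_n(h)n^{-s}=\Lambda(s,h)$.

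I expect the only delicate points to be bookkeeping rather than substance: checking that the relevant finite-part matrices genuinely lie in $K_1(N)$ so that \eqref{phihdef} applies with trivial $\g$; verifying the measure compatibility of the fundamental-domain identifications; and justifying the interchange of summation and integration by absolute convergence (using $a_n(h)\ll n^{k/2}$, which makes this immediate for $\Re(s)$ large in the second identity, after which both sides extend to entire functions of $s$ since $h(iy)$ decays exponentially as $y\to0$ and as $y\to\infty$). The one genuinely structural point is the dichotomy in the first identity: the vanishing for $r\notin\Z^+$ comes from the finite part of $\theta_r$ being a nontrivial character of $\Zhat$ when $r\notin\Z$, and from the absence of any constant or negative-index term in the $q$-expansion when $r\le0$.
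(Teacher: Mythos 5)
Your proof is correct. The paper itself gives no argument here, deferring to \cite{KL} Corollary 12.4 and \cite{petrr} Lemma 3.1, and your computation is exactly the standard unfolding those references carry out: strong approximation to reduce to $[0,1)\times\Zhat$ and $\R^+\times\Zhat^*$, the vanishing of $\int_{\Zhat}\theta_{\fin}(ru)\,du$ for $r\notin\Z$, and the Fourier/Mellin identification with $e^{-2\pi r}a_r(h)$ and $(2\pi)^{-s}\Gamma(s)L(s,h)$.
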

\begin{proof} See e.g. \cite{KL} Corollary 12.4 and \cite{petrr} Lemma 3.1
  respectively.
\end{proof}

\section{The twisting operator}\label{twist}

From now on, we assume that $\chi$ is a primitive Dirichlet character modulo $D$,
  with $(D,N)=1$.
Recall that $L(s,h,\chi)=L(s,h_\chi)$, where $h_\chi\in S_k(D^2N,\chi^2\psi)$ is
  given by
\[h_\chi(z) = \sum_{r=1}^\infty \chi(r) a_r(h) e^{2\pi i rz},\]
or equivalently,
\begin{equation}\label{hchi}
h_\chi =
  \frac1{\tau(\ol\chi)}\sum_{m\mod D}\ol{\chi(m)}h|_{\smat1{m/D}01}
\end{equation}
(see e.g. \cite{bump}, p. 59).
 Likewise, it follows from the definitions that for $g_\infty\in G(\R)^+$,
\begin{equation}\label{phihchi}
\phi_{h_\chi}(g_\infty\times 1_{\fin})=\frac1{\tau(\ol\chi)}\sum_{m\mod D}\ol{\chi(m)}
  \phi_h(\smat1{m/D}01g_\infty\times 1_{\fin}).
\end{equation}
  Because $\chi$ is assumed to be primitive, we have $|\tau(\ol\chi)|=\sqrt{D}$.

We now define a test function $f^\chi:G(\Af)\longrightarrow\C$ which 
  essentially realizes the twisting map $h\mapsto h_\chi$ adelically (see also \cite{RR}).
  It will be supported on the disjoint union
\begin{equation}\label{fchisupp}
\Supp(f^\chi)=\bigcup_{m\mod D,\atop{(m,D)=1}}\mat 1{-m/D}01 Z(\Af)K_1(N),
\end{equation}
where the rational matrix is embedded diagonally in $G(\Af)$.
The value of $f^\chi$ on the coset indexed by $m$ is defined to be
\begin{equation}\label{fchidef}
f^\chi(\smat1{-m/D}01 zk)=\frac{\nu(N) \ol{\chi(m)}{\psi^*(z)}}{\tau(\ol\chi)}.
\end{equation}
Here as before, 
\begin{equation}\label{nu}
\nu(N)=[K:K_0(N)]=[\ol{K}:\ol{K_1(N)}]=\meas(\ol{K_1(N)})^{-1}.
\end{equation}

\begin{lemma}\label{Jlem}
Consider the open compact subgroup
\[J=\{\smat abcd\in K_1(D^2N)|\,a\equiv 1\mod D\Zhat\}.\]
The function $f^\chi$ is right $K_1(N)$-invariant and left $J$-invariant.
\end{lemma}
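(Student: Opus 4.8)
The plan is to verify the two invariance properties directly from the defining formulas \eqref{fchisupp} and \eqref{fchidef}, treating the support and the values separately. For right $K_1(N)$-invariance the support is obviously stable: each coset $\smat1{-m/D}01 Z(\Af)K_1(N)$ is a union of right $K_1(N)$-cosets by construction, so $\Supp(f^\chi)K_1(N)=\Supp(f^\chi)$. On a point $\smat1{-m/D}01 zk$ with $k\in K_1(N)$, multiplying on the right by $k'\in K_1(N)$ gives $\smat1{-m/D}01 z(kk')$, and since $\psi^*$ is being evaluated only on the central part $z$, the value \eqref{fchidef} is unchanged. So right $K_1(N)$-invariance is essentially immediate.

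The substantive part is left $J$-invariance. First I would show $J\cdot\Supp(f^\chi)=\Supp(f^\chi)$. Fix $m$ prime to $D$ and $\gamma=\smat abcd\in J$, so $\gamma\in K_1(D^2N)$ with $a\equiv 1\bmod D\Zhat$; I must locate $\smat1{-m/D}01\gamma$ inside the union. The natural move is to write $\smat1{-m/D}01\gamma\smat1{m'/D}01^{-1}$ for a suitable $m'$ prime to $D$ and check that the result lies in $Z(\Af)K_1(N)$, i.e. is central times an element of $K_1(N)$. Computing the product
\[
\mat1{-m/D}01\mat abcd\mat1{m'/D}01=\mat{a-\frac{mc}{D}}{\;\ast\;}{c}{d+\frac{m'c}{D}},
\]
one sees the lower-left entry is $c\in D^2N\Zhat\subset N\Zhat$ (good), the diagonal entries are $\equiv 1\bmod N\Zhat$ once one uses $c\in D^2N\Zhat$ to kill the correction terms mod $N$, and the remaining obstruction is the $(1,2)$ entry, which is $b+\frac{m'a-md}{D}+(\text{terms in }\Zhat)$; choosing $m'$ so that $m'a\equiv md\equiv m\bmod D$ (possible since $a\equiv 1$ and, from $\det\gamma\in\Zhat^*$ together with $c\in D\Zhat$, $d\equiv a^{-1}\equiv 1\bmod D$) clears the denominator and puts the entry in $\Zhat$. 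This shows the conjugate lies in $K_1(N)$, up to the determinant, which is absorbed into $Z(\Af)$; hence $\gamma\smat1{-m'/D}01\in\smat1{-m/D}01 Z(\Af)K_1(N)$ and the support is $J$-stable, with $m\mapsto m'$ a permutation of $(\Z/D\Z)^*$.

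It then remains to check the values agree: $f^\chi(\gamma g)=f^\chi(g)$ for $g=\smat1{-m'/D}01 zk$. By the support computation, $\gamma g=\smat1{-m/D}01 z' k'$ for the $m'$ above, some $k'\in K_1(N)$, and $z'$ differing from $z$ by $\det\gamma\in\Zhat^*$ (viewed centrally); since $\psi^*$ is a character of $\A^*$ trivial on — actually of controlled value on — $\Zhat^*$, I must track that $\psi^*(\det\gamma)=1$, which holds because $\det\gamma\equiv ad\equiv 1\bmod D^2N\Zhat$ forces $\det\gamma\in 1+N\Zhat$ so $\psi^*(\det\gamma)=\psi(1)=1$ via \eqref{psid}. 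Then \eqref{fchidef} gives $f^\chi(\gamma g)=\frac{\nu(N)\ol{\chi(m)}\psi^*(z)}{\tau(\ol\chi)}$, and the relation $m'a\equiv m\bmod D$ with $a\equiv 1$ gives $m\equiv m'\bmod D$, whence $\ol{\chi(m)}=\ol{\chi(m')}$ and the two values coincide. The main obstacle is the bookkeeping in the matrix conjugation: one has to simultaneously arrange the $(1,2)$-entry to lie in $\Zhat$ and confirm the diagonal and determinant congruences mod $D$ (not just mod $N$), using the defining condition $a\equiv 1\bmod D\Zhat$ of $J$ together with $c\in D^2N\Zhat$ — it is exactly this extra congruence on $a$ that makes the permutation $m\mapsto m'$ reduce to the identity modulo $D$, which is what forces the character values to match.
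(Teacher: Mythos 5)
Your proof is correct and follows essentially the same route as the paper's: a direct matrix computation showing that an element of $J$ can be pushed past the representative $\smat 1{-m/D}01$ at the cost of a factor in $K_1(N)$, using $c\in D^2N\Zhat$ together with $a\equiv 1$ and $d\equiv 1 \pmod{D\Zhat}$ to clear the $(1,2)$-entry and to force $m'\equiv m\pmod D$ so the character values match. The paper streamlines this by exhibiting the single identity $\smat ab{cD^2N}d\smat 1{-m/D}01=\smat 1{-m/D}01\smat{a+mcDN}{b+(d-a)m/D-m^2cN}{cD^2N}{d-mcDN}$ with the same $m$ on both sides, so no auxiliary permutation of residues (or the check that it is the identity) is needed.
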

\begin{proof} The first claim is obvious from the definition of $f^\chi$.
  The second claim follows from the fact that
\[\mat ab{cD^2N}d \mat 1{-\frac m D}01 = \mat 1{-\frac m D}01
  \mat{a+mcDN}{b+(d-a)\tfrac mD-m^2cN} {cD^2N}{d-mcDN},\]
noting that if $a\equiv 1\mod D$ and $d\equiv 1\mod D^2N$,
  the matrix on the right belongs to $K_1(N)$.
\end{proof}

Note that because $f^\chi$ has compact support modulo the center, and
  transforms under $Z(\Af)$ by ${\psi^*}$, it defines an operator on $L^2(\ol{\psi^*})$ by
\begin{equation}\label{Rf}
R(f^\chi)\phi(x)=\int_{\olG(\Af)}f^\chi(g)\phi(xg)dg.
\end{equation}
This operator is closely related to the twisting function $h\mapsto h_\chi$, as
  we now show.

\begin{proposition}
Let $h\in S_k(N,\psi)$ and
let $\chi^*$ be the Hecke character attached to $\chi$ as in \eqref{pb}.
Then for all $x\in G(\Af)$,
\begin{equation}\label{Rfchi}
R(f^\chi)\phi_h(x) =\chi^*(a_D)\phi_{h_\chi}(x),
\end{equation}
where $a$ is determined from $x$ using strong approximation by writing
\[x=x_\Q(x_\infty\times \smat abcd)\]
for $x_\Q\in G(\Q)$, $x_\infty\in G(\R)^+$ and $\smat abcd\in K_1(D^2N)$,
  and $a_D$ is the finite idele with local components $(a_D)_p=a_p$ (resp. $1$)
  if $p|D$ (resp. $p\nmid D$).
\end{proposition}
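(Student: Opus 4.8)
The plan is to reduce the global identity \eqref{Rfchi} to the archimedean statement \eqref{phihchi} by unwinding the definition \eqref{Rf} of the operator $R(f^\chi)$ against the explicit support \eqref{fchisupp} and values \eqref{fchidef} of the test function. First I would use the decomposition \eqref{fchisupp} of $\Supp(f^\chi)$ as a disjoint union of cosets $\smat1{-m/D}01Z(\Af)K_1(N)$ indexed by $m\bmod D$ with $(m,D)=1$; since the integrand in \eqref{Rf} is a function on $\olG(\Af)$ and $f^\chi$ transforms under $Z(\Af)$ by $\psi^*$, the integral over $\olG(\Af)$ collapses to a finite sum over these $m$, each contributing an integral over $\ol{K_1(N)}$. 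On that piece, $f^\chi$ is the constant $\nu(N)\ol{\chi(m)}/\tau(\ol\chi)$ times $\psi^*(z)$, and $\phi_h$ is right $K_1(N)$-invariant up to its central character, so the $\ol{K_1(N)}$-integral just produces $\meas(\ol{K_1(N)})=\nu(N)^{-1}$ by \eqref{nu}. This yields
\[
R(f^\chi)\phi_h(x)=\frac1{\tau(\ol\chi)}\sum_{\substack{m\bmod D\\(m,D)=1}}\ol{\chi(m)}\,\phi_h\!\left(x\,\smat1{-m/D}01\right),
\]
where the rational matrix is embedded diagonally in $G(\Af)$.

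\textbf{Key steps.} The next step is to convert $\phi_h(x\,\smat1{-m/D}01)$, evaluated with $x\,\smat1{-m/D}01$ understood as a \emph{finite} adele, into the archimedean expression appearing in \eqref{phihchi}. Write $x=x_\Q(x_\infty\times\smat abcd)$ as in the statement, with $\smat abcd\in K_1(D^2N)$. Since $\chi$ has conductor $D$ and $(m,D)=1$, the rational number $m/D$ has $p$-adic denominator dividing $D$, so $\smat1{-m/D}01$ lies in $K_p$ for $p\nmid D$ and is a nontrivial unipotent at $p\mid D$. I would then use the diagonal embedding $\smat1{m/D}01\in G(\Q)$ to move this matrix to the archimedean place: $\phi_h(x\,\smat1{-m/D}01)=\phi_h(\smat1{m/D}01 x\,\smat1{-m/D}01)$ by left $G(\Q)$-invariance, and at the places dividing $D$ one combines the unipotent with $\smat abcd$ and absorbs the integer part back into $K_1(N)$ (this is exactly the computation in the proof of Lemma \ref{Jlem}, reading $c=0$), at the cost of replacing the index $m$ by $m' = m a^{-1}\bmod D$ and picking up a central twist. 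Tracking the character carefully, the upshot is that the sum reproduces $\tau(\ol\chi)^{-1}\sum_{m'}\ol{\chi(m')}\,\phi_h(\smat1{m'/D}01(x_\infty\times 1_{\fin}))$ times $\chi^*(a_D)$, where the factor $\chi^*(a_D)=\ol{\chi(a)}$ comes from the reindexing $m\mapsto m'a$ and the primitivity of $\chi$ (so that $\ol{\chi(m)}=\ol{\chi(m')}\,\ol{\chi(a)}$). Comparing with \eqref{phihchi}, the finite-adelic sum is precisely $\phi_{h_\chi}(x_\infty\times 1_{\fin})=\phi_{h_\chi}(x)$, and \eqref{Rfchi} follows.

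\textbf{Main obstacle.} The routine part is the collapse of the adelic integral to a finite sum; the delicate bookkeeping is the reindexing step, where one must verify that conjugating $\smat1{-m/D}01$ past the finite part $\smat abcd\in K_1(D^2N)$ of $x$ produces exactly a shift $m\mapsto ma^{-1}$ in the summation together with the clean extra factor $\chi^*(a_D)$, with no leftover contributions from $b,c,d$ or from places not dividing $D$. This is where one uses that $D^2N\mid$ the lower-left entry and $D\mid(a-1)$ only for the subgroup $J$ of Lemma \ref{Jlem}, but here $\smat abcd$ is merely in $K_1(D^2N)$, so $a$ can be any unit mod $D$ — that is the source of the $\chi^*(a_D)$ factor. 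I expect the main care is needed in checking that the ``integer parts'' that get absorbed into $K_1(N)$ genuinely lie in $K_1(N)$ (not just $K_0(N)$), using $(D,N)=1$ and $(D,nNr)=1$, so that the right $K_1(N)$-invariance of $\phi_h$ applies and no extra $\psi^*$-factor intervenes beyond the ones accounted for.
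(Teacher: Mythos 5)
Your overall strategy is essentially the paper's: the paper first uses the right $J$-invariance of $R(f^\chi)\phi_h$ (Lemma \ref{Jlem}) to reduce to $x_{\fin}=\smat{a_D}{}{}{1}$, where commuting $\smat{1}{-m/D}{0}{1}$ past the finite part is immediate, whereas you commute the unipotent past a general $\smat abcd\in K_1(D^2N)$ directly; either route works, and your collapse of \eqref{Rf} to the finite sum $\tau(\ol\chi)^{-1}\sum_m\ol{\chi(m)}\,\phi_h(x\smat{1}{-m/D}{0}{1})$ is correct.

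However, the bookkeeping in the step you yourself identified as the crux is backwards, and the error is masked by a false identity. Writing $\smat abcd\smat{1}{-m/D}{0}{1}=\smat{1}{-m'/D}{0}{1}\kappa$ and demanding $\kappa\in K_1(N)$, the upper-right entry of $\kappa=\smat{1}{m'/D}{0}{1}\smat abcd\smat{1}{-m/D}{0}{1}$ is $b+(m'd-ma)/D-mm'c/D^2$; since $c\in D^2N\Zhat$ and $d\equiv 1\mod D^2N\Zhat$, integrality forces $m'\equiv ma\mod D$, \emph{not} $m'\equiv ma^{-1}$. Hence $m=m'a^{-1}$, so $\ol{\chi(m)}=\ol{\chi(m')}\,\chi(a)$, and the reindexing produces the factor $\chi(a)$, which by \eqref{pb} is exactly $\chi^*(a_D)$ — the composite in \eqref{pb} involves no complex conjugation (compare the paper's closing line, $\chi(z)=\chi^*(a_D)$ for an integer $z\equiv a_D\mod D\Zhat$). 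You assert instead both $m'=ma^{-1}$ and $\chi^*(a_D)=\ol{\chi(a)}$; these two errors do not cancel: as written your computation yields $\ol{\chi(a)}\,\phi_{h_\chi}(x)=\ol{\chi^*(a_D)}\,\phi_{h_\chi}(x)$, which contradicts \eqref{Rfchi} whenever $\chi$ is not real. The fix is simply to carry out the substitution in the correct direction ($m\mapsto am$), after which the factor $\chi(a)=\chi^*(a_D)$ falls out and the rest of your argument (absorbing $\kappa$ into $K_1(N)$, moving $\smat{1}{m'/D}{0}{1}$ to the archimedean place by left $G(\Q)$-invariance, and invoking \eqref{phihchi}) goes through.
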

\noindent{\em Remark:} If $a\in \Zhat^*$, then $\chi^*(a_D)=\chi^*(a)$.

\begin{proof}
Clearly $R(f^\chi)\phi_h$ inherits the left $G(\Q)$-invariance from $\phi_h$.
Hence we can assume that $x_\Q=1$.
It is an easy consequence of the above lemma that $R(f^\chi)\phi$ is right $J$-invariant.
  Therefore we may modify $x_{\fin}$ on the right by an appropriate element of $J$
  to reduce to the case where $x_{\fin}=\smat {a_D}{}{}1$.
  Hence it suffices to prove that
\[R(f^\chi)\phi_h(x_\infty\times \smat {a_D}{}{}1)
  =\chi^*(a_D)\phi_{h_\chi}(x_\infty\times 1_{\fin}).\]

Let $\alpha_m = \smat{1}{m/D}{0}{1}$.  
From the preceding definitions and the right $K_1(N)$-invariance of $\phi_h$, 
  for any $x\in G(\A)$ we have
\[	R(f^\chi) \phi_h(x)
  = \sum_{m \bmod D} \int_{\alpha_m^{-1} \overline{K_1(N)}} f^\chi(g) \phi_h(xg) dg \]
\[
  = \sum_{m \bmod D} \phi_h(x \alpha_m^{-1}) \frac{\nu(N)\ol{\chi(m)}}{\tau(\ol\chi)}
   \int_{\alpha_m^{-1} \overline{K_1(N)}} dg\]
\[	 = \frac1{\tau(\ol\chi)} \sum_{m \bmod D} \overline{\chi(m)} \phi_h(x \alpha_m^{-1}).
\]
 Taking $x=x_\infty\times \smat {a_D}{}{}1$ we have
\[
	\phi_h(x \alpha_m^{-1}) 
   =\phi_h(x_\infty\times \smat{a_D}{}{}1\smat1{-\frac mD}01)
   =\phi_h(x_\infty\times \smat1{-\frac{a_Dm}D}01\smat{a_D}{}{}1)\]
\[   =\phi_h(x_\infty\times \smat1{-\frac{a_Dm}D}01)\]
by the right $K_1(N)$-invariance of $\phi_h$.  
Fix an integer $z$ relatively prime to $D$ such that $z\equiv a_D\mod D\Zhat$,
and multiply through on the left by $\alpha_{zm}$.
 By the left $G(\Q)$-invariance this has no effect, so the above is
\[=\phi_h(\smat1{\frac{zm}D}01x_\infty\times \smat1{\frac{(z-a_D)m}D}01)
=\phi_h(\alpha_{zm}x_\infty\times 1_{\fin}),\]
again by right $K_1(N)$-invariance.
Therefore
\[
R(f^\chi)\phi_h(x_\infty\times \smat{a_D}{}{}1)
  = \frac1{\tau(\ol\chi)}\sum_{m \bmod D} 
  \overline{\chi(m)} \phi_h(\alpha_{zm} x_\infty \times 1_\mathrm{fin})\]
\[ = \frac{\chi(z)}{\tau(\ol\chi)}\sum_{m \bmod D} 
  \overline{\chi(m)} \phi_h(\alpha_{m} x_\infty \times 1_\mathrm{fin})
  =\chi(z)\phi_{h_\chi}(x_\infty\times 1_{\fin})\]
by \eqref{phihchi}.
This gives the desired result, since $\chi(z)=\chi^*(a_D)$ by \eqref{pb} and \eqref{psid}.
\end{proof} 

It will be useful to define local components for $f^\chi$.
For any prime $p$, we have defined $\chi_p$ to be the character of $\Q_p^*$ attached
  to the Hecke character $\chi^*$.  When $p|D$, we have
\[\chi_p:\Z_p^*\longrightarrow (\Z_p/D\Z_p)^*\cong (\Z/p^{D_p}\Z)^*\longrightarrow\C^*.\]
Still assuming $p|D$, a local version of \eqref{gauss} is the following:
\begin{equation}\label{gaussp}
\int_{\Z_p^*}\chi_p(u)\theta_p(\tfrac{nu}D)d^*u=\frac{\ol{\chi_p(n)}}{\varphi(p^{D_p})}
  \tau(\chi)_p,
\end{equation}
where
\begin{equation}\label{taup}
\tau(\chi)_p=\chi_p(\tfrac{D}{p^{D_p}})\tau(\chi_p)
\end{equation}
for $\ds\tau(\chi_p)=\sum_{m\in(\Z/p^{D_p}\Z)^*}\chi_p(m)e^{2\pi im/p^{D_p}}$ 
the Gauss sum of the character $\chi_p$.
Then by \eqref{gaussp} and \eqref{gauss}, we have
\begin{equation}\label{taulocal}
   \tau(\chi)=\prod_{p|D}\tau(\chi)_p.
\end{equation}

  Given a prime $p$, we define a local function $f^\chi_p:G(\Q_p)\longrightarrow \C$ as follows.
If $p|D$, we take
\begin{equation}\label{fpsupp}
\Supp(\f^\chi_p)= \bigcup_{m\mod D\Z_p\atop{p\nmid m}}\smat{1}{-m/D}01Z_pK_p
\end{equation}
(a disjoint union), and define $f^\chi_p=\sum_m f^\chi_{p,m}$, where $f^\chi_{p,m}$
  is supported on the coset indexed by $m$ in \eqref{fpsupp}, and is given by
\begin{equation}\label{fchipm}
f^\chi_{p,m}(\smat 1{-m/D}01zk)=\frac{\ol{\chi_p(m)}\psi_p(z)}{\tau(\ol\chi)_p}
\end{equation}
for $\tau(\ol\chi)_p$ as in \eqref{taup}.
The value is independent of the choice of representative for $m\in (\Z_p/p^{D_p}\Z_p)^*$ 
  since $\chi_p$ has conductor $p^{D_p}$.
If $p\nmid D$, then $f^\chi_p$ is supported on $Z_pK_1(N)_p$, and we define it by
\[f_p^\chi(zk)= \nu_p(N)\psi_p(z),\]
where $\nu_p(N)=[K_p:K_0(N)_p]=\nu(p^{N_p})$.
It is easily verified using \eqref{psid} (applied to $\chi^*$) and
\eqref{taulocal} that $f^\chi=\prod_p f_p^\chi$.

\section{The Hecke operator}

We refer to \S13 of \cite{KL} for a more detailed account of the adelic
  Hecke operator defined here.
Fix a positive integer $n$ with $(n,DN)=1$.  
Define
\[M(n,N)=\{g=\smat abcd\in M_2(\Zhat)|\, \det g\in n\Zhat^*, c\in N\Zhat\}.\]
Define a function $f^n:G(\Af)\longrightarrow\C$ with
\[\Supp(f^n)= Z(\Af)M(n,N) = Z(\Q^+)M(n,N)\]
 by 
\[f^n(z_\Q m)=\nu(N)\psi^*(d_N)\quad(z_\Q\in Z(\Q^+), m=\smat abcd\in M(n,N)).\]
One shows easily that $f^n$ is bi-$K_1(N)$-invariant.

For any prime $p$, let
\[M(n,N)_p=\{g=\smat abcd\in M_2(\Z_p)|\, \det g\in n\Z_p^*, c\in N\Z_p\}.\]
Notice that if $p\nmid n$ then $M(n,N)_p=K_0(N)_p$.
Define a function $f^n_p:G(\Q_p)\rightarrow\C$, supported on $Z(\Q_p)M(n,N)_p$,
  by
\begin{equation}\label{fnp}
f^n_p(zm)=\nu_p(N)\psi_p(z)\psi_p((d_N)_p)
\quad(z\in Z(\Q_p), m=\smat abcd\in M(n,N)_p).
\end{equation}
Then it is straightforward to check that $f^n(g)=\prod_pf^n_p(g_p)$ for
  $g\in G(\Af)$.

\begin{proposition} For $h\in S_k(N,\psi)$,
\begin{equation}\label{Rfn}
R(f^n)\phi_h = n^{1-k/2}\phi_{T_n h}.
\end{equation}
\end{proposition}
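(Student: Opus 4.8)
The plan is to verify identity \eqref{Rfn} by evaluating both sides on a well-chosen adelic argument and invoking strong approximation. Since $R(f^n)\phi_h$ inherits left $G(\Q)$-invariance from $\phi_h$ and is easily seen (from the bi-$K_1(N)$-invariance of $f^n$ noted just above) to be right $K_1(N)$-invariant, and since $\phi_{T_n h}$ has the same invariance properties by its definition \eqref{phihdef}, it suffices by \eqref{sa} to check the equality at arguments of the form $x=g_\infty\times 1_{\fin}$ with $g_\infty\in G(\R)^+$. So the real content is a computation of $R(f^n)\phi_h(g_\infty\times 1_{\fin})$.

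First I would unwind the definition \eqref{Rf}: because $f^n$ is supported on $Z(\Af)M(n,N)=Z(\Q^+)M(n,N)$ and is invariant under $Z(\Af)$ by $\psi^*$ and under right $K_1(N)$, the integral over $\olG(\Af)$ collapses to a finite sum over the cosets of $K_1(N)_{\fin}$ inside $M(n,N)/Z$. The standard coset representatives are the matrices $\smat a b 0 d$ with $ad=n$, $a,d>0$, $0\le b<d$ (embedded diagonally in $G(\Af)$), exactly as in the classical definition of $T_n$; here one uses $(n,N)=1$ so that these lie in $M(n,N)$ and exhaust the double coset. The value of $f^n$ on the representative attached to $\smat a b 0 d$, after dividing out the central part, contributes $\nu(N)\psi_p$-factors whose product over $p\mid N$ is $\psi(a)$ (or equivalently $\psi^*(d_N)$), matching the $\psi(a)$ in the classical formula for $T_n h$. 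The factor $\nu(N)$ in the definition of $f^n$ cancels against $\meas(\ol{K_1(N)})=\nu(N)^{-1}$ from \eqref{nu}, just as the analogous cancellation occurred in the proof of \eqref{Rfchi}.

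Next I would translate the archimedean side. For each representative one has $\phi_h(g_\infty\times \smat a b 0 d^{-1}) = \phi_h(\smat a b 0 d^{-1}g_\infty\times 1_{\fin})$ after moving the rational matrix across by left $G(\Q)$-invariance — wait, more carefully: $\smat a b 0 d\in G(\Q)$ sits diagonally, so $g_\infty\times \smat a b 0 d = \smat a b 0 d^{-1}\cdot(\smat a b 0 d g_\infty\times 1_{\fin})$... the cleanest route is to push the finite-part matrix to the infinite place using the diagonal embedding and left $\Q$-invariance, landing on $\phi_h(\gamma_j g_\infty\times 1_{\fin})$ for $\gamma_j=\smat a b 0 d$. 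Then \eqref{phihdef} gives $\phi_h(\gamma_j g_\infty\times 1_{\fin})=j(\gamma_j g_\infty,i)^{-k}h(\gamma_j g_\infty(i))$, and using the cocycle relation $j(\gamma_j g_\infty,i)=j(\gamma_j,g_\infty(i))j(g_\infty,i)$ one factors out $j(g_\infty,i)^{-k}$, leaving $\sum_j j(\gamma_j,g_\infty(i))^{-k}h(\gamma_j g_\infty(i))$ weighted by $\psi(a)$. Since $\det\gamma_j=n$, one has $j(\gamma_j,z)=n^{-1/2}(\text{classical automorphy factor}) = n^{-1/2}\cdot d$ for these upper-triangular $\gamma_j$, and collecting the power of $n$ reproduces exactly the classical expression $n^{-k/2}\cdot\bigl(n^{1-k}T_n h\bigr)(g_\infty(i))\cdot j(g_\infty,i)^{-k}\cdot n^{?}$ — comparing with $\phi_{T_n h}(g_\infty\times 1_{\fin})=j(g_\infty,i)^{-k}(T_n h)(g_\infty(i))$ and keeping track of the normalization $T_n h = n^{k-1}\sum_{ad=n}\sum_b \psi(a)d^{-k}h(\tfrac{az+b}{d})$ yields the overall constant $n^{1-k/2}$.

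The main obstacle I anticipate is purely bookkeeping: correctly matching the three separate powers of $n$ — one from the $n^{k-1}$ in the definition of $T_n$, one from the $d^{-k}$ summed against $ad=n$, and one from the $\det^{-1/2}$ in $j(\gamma_j,\cdot)$ — so that they combine to give precisely $n^{1-k/2}$ and not some other exponent. A secondary point requiring care is confirming that the coset representatives of $K_1(N)_{\fin}\bs M(n,N)/Z$ (rather than the finer $K_0$ version) are exactly the classical $\smat a b 0 d$, which relies on $(n,N)=1$ and on $\det K_1(N)=\Zhat^*$; this is essentially worked out in \S13 of \cite{KL}, which I would cite for the coset count, and the $\psi$-factor identification is the same cancellation already used in the proof of \eqref{Rfchi}.
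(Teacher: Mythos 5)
The paper offers no proof of this proposition at all---it simply cites \cite{KL}, Proposition 13.6---so your proposal is in effect a reconstruction of the argument contained in that reference, and its overall strategy is the right one: reduce to $x=g_\infty\times 1_{\fin}$ via strong approximation and the invariance properties of both sides, collapse the integral \eqref{Rf} to a finite coset sum (with the $\nu(N)$ in the definition of $f^n$ cancelling $\meas(\ol{K_1(N)})$), push the rational representatives to the archimedean place, and match against the classical definition of $T_n$. The one step where your sketch settles on the wrong alternative is the direction of that push: since the integrand $g\mapsto f^n(g)\phi_h(xg)$ is right $K_1(N)$-invariant, you decompose $M(n,N)$ into \emph{left} cosets $\gamma_j K_0(N)$, and then left $G(\Q)$-invariance gives $\phi_h(g_\infty\times\gamma_j)=\phi_h(\gamma_j^{-1}g_\infty\times 1_{\fin})$ --- the \emph{inverse} acts at infinity, not $\gamma_j$ as you conclude after your hesitation. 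Correspondingly the left-coset representatives are $\smat ab0d$ with $ad=n$ and $b$ running mod $a$ (not mod $d$), and $\gamma_j^{-1}\equiv\smat d{-b}0a$ in $\olG(\Q)$; the classical sum $\sum\psi(a)d^{-k}h(\tfrac{az+b}{d})$ is recovered only after the reindexing $(a,d,-b)\mapsto(d,a,b)$. That same swap is what turns the weight $\psi^*(d_N)=\psi(d)$ read off from \eqref{fnp} into the $\psi(a)$ of the classical formula --- these are not equal coset by coset (they differ by $\psi(n)$), so the ``or equivalently'' in your second paragraph needs this justification. With those corrections the powers of $n$ combine exactly as you predict: $j(\gamma_j^{-1},z)^{-k}=n^{k/2}a^{-k}$ against the $n^{k-1}$ in the definition of $T_n$ yields $n^{k/2}\cdot n^{1-k}=n^{1-k/2}$. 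So the proposal is sound in outline, with a self-cancelling slip in the intermediate bookkeeping rather than a genuine gap.
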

\begin{proof}
See \cite{KL}, Proposition 13.6.
\end{proof}

\section{The global test function}

We take $f_\infty(g)={d_k}\ol{\sg{\pi_k(g)v_0,v_0}}$, where
  $\pi_k$ is the weight $k$ discrete series representation of $\GL_2(\R)$ with
  formal degree $d_k=\frac{k-1}{4\pi}$, central character $\smat x{}{}x\mapsto\sgn(x)^k$,
  and lowest weight unit vector $v_0$.
  Explicitly,
\[
f_\infty(\mat abcd)=
\frac{(k-1)}{4\pi}\frac{\det(g)^{k/2}(2i)^k}{(-b+c+(a+d)i)^k}
\]
if $ad-bc>0$, and it vanishes otherwise (see \cite{KL}, Theorem 14.5).
This function is self-adjoint, meaning
\begin{equation}\label{finfsa}
f_\infty(g)=\ol{f_\infty(g^{-1})}.\end{equation}
It is integrable if and only if $k>2$ (\cite{KL}, Prop. 14.3).

Given two functions $f_1,f_2\in L^1(\psi^*)$, we define their convolution
\[f_1*f_2(x)=\int_{\olG(\A)}f_1(g)f_2(g^{-1}x)dg
  =\int_{\olG(\A)}f_1(xg^{-1})f_2(g)dg.\]
It is straightforward to show that
\begin{equation}\label{*}
R(f_1*f_2)=R(f_1)\circ R(f_2)
\end{equation}
as operators on $L^2(\ol{\psi^*})$, where
\[R(f)\phi(x)=\int_{\olG(\A)}f(g)\phi(xg)dg.\]

Fix an integer $n>0$ relatively prime to $DN$, and set
\begin{equation}\label{fconv}
f=(f_\infty\times f^\chi)*(f_\infty\times f^n).
\end{equation}
Local components for $f$ can be defined as follows.
\begin{proposition}
With notation as in the previous two sections, define
\[f_p = \begin{cases} f^\chi_p=f^n_p &\text{if }p\nmid nD\\
  f^\chi_p &\text{if }p|D\\
  f^n_p & \text{if }p|n.\end{cases}\]
Then $f=f_\infty\prod_p f_p$.
\end{proposition}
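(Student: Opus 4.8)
The plan is to factor the convolution \eqref{fconv} along the decomposition $\olG(\A)=\olG(\R)\times\olG(\Af)$ and the matching factorization of Haar measure. Since $f_\infty\times f^\chi$ and $f_\infty\times f^n$ are external products of a function on $\olG(\R)$ with one on $\olG(\Af)$, the convolution over $\olG(\A)$ splits as
\[f=(f_\infty*f_\infty)\times(f^\chi*f^n),\]
the first convolution being over $\olG(\R)$ and the second over $\olG(\Af)$. For the finite factor I would invoke the Euler products $f^\chi=\prod_pf^\chi_p$ and $f^n=\prod_pf^n_p$ established above: since for all but finitely many $p$ both $f^\chi_p$ and $f^n_p$ are (a central twist of) the characteristic function of $\ol{K_p}$, which has total mass $1$, convolution over the restricted direct product $\olG(\Af)=\prod_p\olG(\Q_p)$ decomposes into local convolutions, so that $f^\chi*f^n=\prod_p(f^\chi_p*f^n_p)$. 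It then suffices to identify $f_\infty*f_\infty$ and each $f^\chi_p*f^n_p$.

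For the archimedean factor (here $k>2$ ensures $f_\infty\in L^1(\olG(\R))$), recall that $f_\infty(g)=d_k\ol{\sg{\pi_k(g)v_0,v_0}}$ with $v_0$ a unit vector and $d_k=\tfrac{k-1}{4\pi}$ the formal degree of $\pi_k$ for our measure on $\olG(\R)$. Writing both factors of $f_\infty*f_\infty(x)$ as matrix coefficients of $v_0$ and applying the Schur orthogonality relations for the discrete series $\pi_k$ (cf.\ \cite{KL}) shows, after a short computation, that $f_\infty*f_\infty=f_\infty$; thus the archimedean component of $f$ is $f_\infty$.

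For the local convolutions at finite $p$ I would treat three cases, in each of which one factor is a mass-one idempotent that acts as the identity on the relevant invariant vectors. If $p\nmid nD$, then $f^\chi_p$ equals $\nu_p(N)$ times the characteristic function of $\ol{K_1(N)_p}$ (twisted by $\psi_p$ under the center), while $f^n_p$ is bi-$K_1(N)_p$-invariant; since $\meas(\ol{K_1(N)_p})=\nu_p(N)^{-1}$, the function $f^\chi_p$ has total mass $1$, and convolving it against the left-$K_1(N)_p$-invariant $f^n_p$ returns $f^n_p$, i.e.\ $f^\chi_p*f^n_p=f^n_p$ (and this coincides with $f^\chi_p$ when $p\nmid N$). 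If $p\mid D$, then $p\nmid nN$, so $f^n_p$ collapses to the mass-one idempotent supported on $\ol{K_p}$ while $f^\chi_p$ is right $K_p$-invariant, whence $f^\chi_p*f^n_p=f^\chi_p$. If $p\mid n$, then $p\nmid DN$, and now it is $f^\chi_p$ that collapses to the mass-one idempotent on $\ol{K_p}$ while $f^n_p$ is left $K_p$-invariant, giving $f^\chi_p*f^n_p=f^n_p$. Assembling the three cases yields $f^\chi*f^n=\prod_pf_p$ with $f_p$ as stated, hence $f=f_\infty\prod_pf_p$.

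The only point demanding real care is the normalization in the first case: one must check that the constant $\nu_p(N)$ in the definition of $f^\chi_p$ exactly cancels $\meas(\ol{K_1(N)_p})$, i.e.\ that $[\ol{K_p}:\ol{K_1(N)_p}]=\nu_p(N)$. This follows from $\ol{K_p}/\ol{K_1(N)_p}\cong K_p/\bigl(K_1(N)_p\,(Z(\Q_p)\cap K_p)\bigr)$ together with the fact that the scalar matrices $Z(\Q_p)\cap K_p$ surject onto $K_0(N)_p/K_1(N)_p\cong(\Z/p^{N_p}\Z)^*$, so that $K_1(N)_p\,(Z(\Q_p)\cap K_p)=K_0(N)_p$ and the index equals $[K_p:K_0(N)_p]=\nu_p(N)$. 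Everything else is routine bookkeeping with supports and Haar measures.
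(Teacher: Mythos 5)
Your proof is correct and follows the same route as the paper: factor the convolution over the places, use Schur orthogonality for the discrete series to get $f_\infty*f_\infty=f_\infty$, and compute the local convolutions at finite $p$. The only difference is that you spell out the finite-place computations (mass-one idempotents acting as identities on invariant functions, plus the index check $[\ol{K_p}:\ol{K_1(N)_p}]=\nu_p(N)$) which the paper dismisses as "simple direct computation."
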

\begin{proof}
Because $f_\infty\times f^\chi$ and $f_\infty\times f^n$ are both factorizable
  and identically $1$ on $K_p$ for a.e. $p$, the integral defining their
  convolution is factorizable, and hence
\[f=(f_\infty\times f^\chi)*(f_\infty\times f^n)=(f_\infty*f_\infty)\prod_p
  (f^\chi_p*f^n_p).\]
It follows directly from the orthogonality relations for discrete series that
  $f_\infty*f_\infty=f_\infty$.  Indeed,
\[f_\infty*f_\infty(x)=d_k^2\int_{\olG(\R)}\ol{\sg{\pi_k(g)v_0,v_0}\sg{\pi_k(g^{-1}x)v_0,v_0}}dg\]
\[ = d_k^2\int_{\olG(\R)}\sg{\pi_k(g)v_0,\pi_k(x)v_0}\ol{\sg{\pi_k(g)v_0,v_0}}dg
 = d_k^2\frac{\sg{v_0,v_0}\ol{\sg{\pi_k(x)v_0,v_0}}}{d_k}=f_\infty(x).\]
Likewise, simple direct computation shows that for finite primes $p$,
   $f^\chi_p*f^n_p=f_p$ as given.
\end{proof}
\noindent Globally, the support of $f$ is
\begin{equation}\label{fsup}
\Supp(f)=G(\R)^+\times \bigcup_{m\mod D\atop{(m,D)=1}}\mat{1}{-m/D}01Z(\Q^+)M(n,N).
\end{equation}
The union over $m$ is easily seen to be disjoint, using the fact that $(n,D)=1$.
  Accordingly, we can write
\begin{equation*}
f_{\fin}=\sum_{m\in (\Z/D\Z)^*} f_m,
\end{equation*}
where $f_m$ is supported on the coset indexed by $m$ in \eqref{fsup}, and
\begin{equation}\label{fm}
f_m(\smat 1{-m/D}01 zk)= \frac{\nu(N)\ol{\chi(m)}\psi^*(d_N)}{\tau(\ol\chi)}
\end{equation}
  for $z\in Z(\Q^+)$ and $k=\smat abcd\in M(n,N)$.

  Under the condition
  $k>2$ (which will be in force throughout) $f\in L^1(\psi^*)$, so
  the operator $R(f)$ on $L^2(\ol{\psi^*})$ is defined.

\begin{proposition}\label{Rfproj}
The operator $R(f)$ factors through the
  orthogonal projection of $L^2(\ol{\psi^*})$ onto the finite dimensional subspace
  $S_k(N,\psi)$ (embedded in $L^2(\ol{\psi^*})$ via \eqref{phihdef}).
\end{proposition}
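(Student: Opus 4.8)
The plan is to strip off the $\chi$-twist and reduce to the Hecke operator $R(f_\infty\times f^n)$, whose behaviour is essentially that treated in \cite{KL}. By \eqref{fconv} and \eqref{*} we have $R(f)=R(f_\infty\times f^\chi)\circ R(f_\infty\times f^n)$, and since $k>2$ the function $f_\infty\times f^\chi$ lies in $L^1(\psi^*)$, so $R(f_\infty\times f^\chi)$ is a bounded operator on $L^2(\ol{\psi^*})$. It therefore suffices to show that $R(f_\infty\times f^n)$ annihilates $S_k(N,\psi)^\perp$ and carries $S_k(N,\psi)$ into itself — the latter being already recorded in \eqref{Rfn} via $R(f_\infty\times f^n)\phi_h=n^{1-k/2}\phi_{T_nh}$. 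Granting this, $R(f_\infty\times f^n)$ factors as $\iota\circ A\circ P$, where $P$ is the orthogonal projection onto $S_k(N,\psi)$, $\iota$ is the inclusion, and $A=R(f_\infty\times f^n)|_{S_k(N,\psi)}$; composing on the left with $R(f_\infty\times f^\chi)$ then exhibits $R(f)$ as $(R(f_\infty\times f^\chi)\circ\iota\circ A)\circ P$, which factors through $P$.

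To prove the reduced claim, write $R(f_\infty\times f^n)=R(f_\infty)\circ R(f^n)$ and decompose $L^2(\ol{\psi^*})$ into its cuspidal, residual, and continuous subspaces. Since $f^n$ is bi-$K_1(N)$-invariant, $R(f^n)$ annihilates the orthogonal complement $V^\perp$ of the subspace $V$ of right $K_1(N)$-invariant vectors and preserves $V$; moreover $R(f^n)$, being finite-adelic, leaves the archimedean component of every vector in $V$ unchanged. So everything reduces to the action of $R(f_\infty)$ on $V$. Now $f_\infty$ is a scalar multiple of a matrix coefficient of the square-integrable representation $\pi_k$, integrable precisely because $k>2$; the Schur orthogonality relations for discrete series (cf.\ \cite{KL}, \S14) then show that $R(f_\infty)$ annihilates every irreducible constituent of $L^2(\ol{\psi^*})$ whose archimedean component is not isomorphic to $\pi_k$, while on a constituent with archimedean component $\pi_k$ it is the orthogonal projection onto the lowest weight line $\C v_0$. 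The residual spectrum (one-dimensional representations $\eta\circ\det$) and the continuous spectrum (a direct integral of unitary principal series) have no archimedean component isomorphic to $\pi_k$ and are therefore killed; on the cuspidal part, $R(f_\infty)$ retains only the holomorphic weight-$k$ vectors, which inside $V$ form exactly $S_k(N,\psi)$ by the standard correspondence between such vectors and cusp forms — the compatibility of central characters here being precisely the hypothesis \eqref{k}. Consequently $R(f_\infty\times f^n)$ kills $V^\perp$ and, on $V$, kills everything orthogonal to the holomorphic weight-$k$ subspace; since $S_k(N,\psi)\subseteq V$, this means it annihilates $S_k(N,\psi)^\perp$, as claimed.

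I expect the archimedean step to be the main obstacle: one must verify carefully that $R(f_\infty)$ is the orthogonal projection onto the weight-$k$ lowest-weight vectors and that it annihilates the residual and continuous spectra. This uses $k>2$ in two ways — so that $f_\infty\in L^1$, and so that $\pi_k$ is genuinely a discrete series — together with the Schur orthogonality relations and the classification of the unitary representations of $\GL_2(\R)$ occurring in $L^2(\ol{\psi^*})$. By contrast, the finite-level bookkeeping (the bi-$K_1(N)$-invariance of $f^n$, and the identification of the right $K_1(N)$-invariant holomorphic weight-$k$ cuspidal vectors with $S_k(N,\psi)$) is routine given the conventions fixed in \S2 and the discussion around \eqref{Rfn}, and the passage from $R(f_\infty\times f^n)$ back to $R(f)$ is immediate.
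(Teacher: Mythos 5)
Your argument is correct and follows the same route as the paper: both reduce via \eqref{fconv} and \eqref{*} to the statement that $R(f_\infty\times f^n)$ factors through the orthogonal projection onto $S_k(N,\psi)$. The paper simply quotes this as Corollary 13.13 of \cite{KL}, whereas you sketch its proof (Schur orthogonality for the integrable matrix coefficient $f_\infty$ together with the bi-$K_1(N)$-invariance of $f^n$ and the classification of the spectrum); that sketch is sound and is essentially the argument behind the cited corollary.
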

\begin{proof}
The operator $R(f_\infty\times f^n)$ factors through the orthogonal projection
  onto $S_k(N,\psi)$ (\cite{KL}, Corollary 13.13).  Therefore by \eqref{*} and
  \eqref{fconv},
  $R(f)$ has the same property.
\end{proof}
\noindent{\em Remark:} The image of $R(f)$ is {\em not} contained in any classical 
  space of cusp forms $S_k(\Gamma_1(M))$.
  Indeed, it is immediate from \eqref{Rfchi} that the image of $R(f)$ is
  not 
  left $K_1(M)$-invariant for any $M$, since a congruence condition on 
  $a$ is needed.\\

The operator $R(f)$ is an integral operator given by the continuous kernel
\begin{equation}\label{kerdef}
K(g_1,g_2)=\sum_{h\in \mathcal{F}}\frac{R(f)\phi_h(g_1)\ol{\phi_h(g_2)}}{\|h\|^2}
  =\sum_{\g\in \olG(\Q)}f(g_1^{-1}\g g_2).
\end{equation}
Here the spectral sum is taken over any orthogonal basis $\mathcal{F}$
  for $S_k(N,\psi)$, as a consequence of Proposition \ref{Rfproj}, and both
  sums are absolutely convergent.

\section{Spectral side}
Fix a positive integer $r$ relatively prime to $D$.
We prove Theorem \ref{main} by computing the following integral
\begin{equation}\label{ktf}
\int_{\Q^*\bs\A^*}\int_{\Q\bs\A}
  K(\mat y001,\mat{1}{x}{0}{1})\,\theta_r(x)\ol{\chi^*(y)}|y|^{s-k/2}dx\, d^*y
\end{equation}
using the two expressions for the kernel \eqref{kerdef}.
We note that the double integral is absolutely convergent for all $s\in\C$ (see below).

On the spectral side, \eqref{ktf} becomes
\begin{equation}\label{spec0}
\sum_{h\in\mathcal{F}}\frac1{\|h\|^2}\int_{\Q^*\bs\A^*}R(f)\phi_h(\smat y{}{}{1})
  \ol{\chi^*(y)}|y|^{s-k/2}d^*y\cdot
  \int_{\Q\bs\A}\ol{\phi_h(\smat 1x01)}\theta_r(x)dx.
\end{equation}
Choose $\mathcal{F}$ in \eqref{kerdef} to consist of eigenvectors of $T_\n$.  
  Then for $h \in \mathcal{F}$, we write $T_n h=\lambda_n(h) h$, so that
  by \eqref{Rfchi} and \eqref{Rfn},
\[R(f)\phi_h(\smat y{}{}1)=n^{1-k/2}\lambda_n(h)\chi^*(y)\phi_{h_\chi}(\smat y{}{}1)\]
 for all $y\in \R^+\times \Zhat^*\cong \Q^*\bs\A^*$.
   Consequently the factor $\ol{\chi^*(y)}$ in \eqref{spec0} is cancelled out, and
   \eqref{spec0} becomes
\begin{equation}\label{spec}
\sum_{h\in\mathcal{F}}\frac{n^{1-k/2}\lambda_n(h)}{\|h\|^2}
\int_{\Q^*\bs\A^*}\phi_{h_\chi}(\smat{y}{}{}1)|y|^{s-k/2}d^*y
  \int_{\Q\bs\A}\ol{\phi_h(\smat1x{}1)}{\theta_r(x)}dx
 \end{equation}
\begin{equation}\label{spec2}=
\frac{n^{1-k/2}}{e^{2\pi r}}\sum_{h\in\mathcal{F}}
 \frac{\lambda_n(h)\ol{a_r(h)}}{\|h\|^2}\Lambda(s,h,\chi)
\end{equation}
by Lemma \ref{basic}.
The two integrals in \eqref{spec} are absolutely convergent for all $s$, so we
  have the following.

\begin{proposition} The double integral \eqref{ktf} is absolutely convergent for all $s\in\C$.
\end{proposition}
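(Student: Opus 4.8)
The plan is to exploit the spectral expansion \eqref{spec} of the double integral \eqref{ktf} directly, since each of the two inner integrals appearing there has already been identified with an explicit, convergent quantity via Lemma \ref{basic}. First I would record that the factorization of \eqref{ktf} into the sum \eqref{spec} over the orthogonal basis $\mathcal{F}$ is legitimate: the kernel identity \eqref{kerdef} expresses $K$ as a finite spectral sum (finite because $S_k(N,\psi)$ is finite-dimensional and $R(f)$ factors through the projection onto it by Proposition \ref{Rfproj}), so interchanging the finite sum with the double integral over $\Q^*\bs\A^*\times\Q\bs\A$ is unproblematic provided each resulting term converges absolutely. Thus it suffices to check absolute convergence of the two inner integrals for each fixed $h\in\mathcal{F}$ and each $s\in\C$.

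For the $x$-integral, $\int_{\Q\bs\A}\ol{\phi_h(\smat1x01)}\theta_r(x)\,dx$ is an integral of a continuous function over the compact group $\Q\bs\A$, hence trivially absolutely convergent; by Lemma \ref{basic} it equals $e^{-2\pi r}\ol{a_r(h)}$ (or $0$), independent of $s$. For the $y$-integral, after the cancellation of $\ol{\chi^*(y)}$ recorded in the passage from \eqref{spec0} to \eqref{spec}, we are looking at $\int_{\Q^*\bs\A^*}\phi_{h_\chi}(\smat y{}{}1)|y|^{s-k/2}\,d^*y$. Here I would invoke the second identity of Lemma \ref{basic} (applied to $h_\chi\in S_k(D^2N,\chi^2\psi)$), which identifies this with $\Lambda(s,h_\chi)=\Lambda(s,h,\chi)$; the point is that the completed $L$-function of a cusp form is entire, and the defining Mellin integral converges absolutely for all $s$ because $\phi_{h_\chi}(\smat y{}{}1)$ decays rapidly as $|y|\to\infty$ (exponential decay coming from the cusp form, via its Fourier expansion and the rapid decay of $e^{-2\pi y}$) and, using the functional equation for $\phi_{h_\chi}$ under $\smat y{}{}1\mapsto\smat{1/y}{}{}1$, also as $|y|\to0$. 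Concretely one splits $\Q^*\bs\A^*\cong\R^+\times\Zhat^*$ into $|y|\ge1$ and $|y|\le1$; on $|y|\ge1$ the integrand is dominated by a rapidly decreasing function times $|y|^{\sigma-k/2}$, and on $|y|\le1$ one applies the functional equation to reduce to the previous case.

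Combining, for each $h$ the product of the two inner integrals has absolute value at most a constant (depending on $h$, $r$, $s$) times $|\Lambda(s,h,\chi)|$, which is finite for every $s\in\C$; summing the finitely many terms of $\mathcal{F}$ and multiplying by $n^{1-k/2}$ (a fixed scalar) shows \eqref{ktf} converges absolutely for all $s$, which is the assertion of the proposition. I do not anticipate a genuine obstacle here: the only thing requiring care is making explicit the rapid decay of $\phi_{h_\chi}(\smat y{}{}1)$ at both ends of $\R^+$, and that is precisely the content already packaged in the cited Lemma 3.1 of \cite{petrr} underlying Lemma \ref{basic}. One could alternatively argue on the geometric side using \eqref{kerdef}, bounding $\sum_{\g\in\olG(\Q)}|f(g_1^{-1}\g g_2)|$ and integrating term by term, but the spectral route is shorter because all the hard analytic estimates have been done upstream.
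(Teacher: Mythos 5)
Your proposal is correct and follows essentially the same route as the paper, which deduces the proposition from the fact that the spectral expansion \eqref{spec} is a finite sum (by Proposition \ref{Rfproj}) in which each term is a product of two absolutely convergent period integrals — the $x$-integral over the compact group $\Q\bs\A$ and the Mellin integral of the rapidly decaying cusp form $\phi_{h_\chi}$, as packaged in Lemma \ref{basic}. The paper states this in one line before the proposition; you have simply supplied the details.
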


\section{Geometric side}

For the moment, let $H(\A)=\ol{M}(\A)\times N(\A)\cong \A^*\times \A$.
Inserting the geometric expression  $K(g_1,g_2)=\sum_\g f(g_1^{-1}\g g_2)$ 
  into \eqref{ktf}, we get
\[
\int_{H(\Q)\bs H(\A)}
  \sum_{\g\in\olG(\Q)}f(\smat {y^{-1}}001 \g \smat{1}{x}{0}{1})\,\theta_r(x)
  \ol{\chi^*(y)}|y|^{s-k/2}dx\, d^*y\]
\[=\int_{H(\Q)\bs H(\A)}
  \sum_\delta\sum_{\g\in[\delta]}f(\smat {y^{-1}}001 \g \smat{1}{x}{0}{1})\,\theta_r(x)
  \ol{\chi^*(y)}|y|^{s-k/2}dx\, d^*y,\]
where $\delta$ ranges over a set of representatives for the 
  $H(\Q)$-orbits in $\olG(\Q)$ relative to the action
$(m,n)\cdot \g=m^{-1}\g n$,
and $[\delta]=\{m^{-1}\delta n|\, (m,n)\in H_\delta(\Q)\bs H(\Q)\}$ is the
  orbit.  It is not hard to check that in fact for all $\delta$, the stabilizer
  $H_\delta(\Q)=\{1\}$.  Therefore,
  \eqref{ktf} is formally equal to 
\begin{equation}\label{geom}
\sum_{\delta}\int_{\A^*}\int_{\A}f(\smat {y^{-1}}{}{}1\delta\smat{1}{x}{}{1})
  \theta_r(x) \ol{\chi^*(y)}|y|^{s-k/2}dx\,d^*y,
\end{equation}
where $\delta$ runs through a set of representatives for 
  $\ol{M}(\Q)\bs\olG(\Q)/N(\Q)$.
By the Bruhat decomposition
\[G(\Q)=M(\Q)N(\Q)\cup M(\Q)N(\Q)\smat{}{-1}{1}{}N(\Q),\]
a set of representatives $\delta$ is given by
\begin{equation}\label{deltaset}
 \{1\} \cup \{\smat 0{-1}{1}0\}\cup \{\smat{t}{-1}{1}0|\, t\in\Q^*\}.
\end{equation}
The equality between \eqref{ktf} and \eqref{geom} is valid on the
  strip $1<\Re(s)<k-1$.  This is a consequence of the following.

\begin{proposition}\label{conv} Suppose $1<\Re(s)<k-1$.  Then
\[\sum_{\delta}\int_{\A^*}\int_{\A}\left|f(\smat{y^{-1}}{}{}1\delta\smat{1}{x}{}{1})
  \theta_r(x)\ol{\chi^*(y)}|y|^{s-k/2}\right| dx\,d^*y <\infty.\]
\end{proposition}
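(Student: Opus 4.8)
The plan is to split the sum over $\delta$ according to the three families in \eqref{deltaset} and bound each piece by exploiting the explicit support \eqref{fsup} of $f$ together with its factorization $f=f_\infty\prod_p f_p$. For each $\delta$, the condition that $\smat{y^{-1}}{}{}1\delta\smat1x{}1$ lie in $\Supp(f)$ forces $y$ into a compact set of ideles (modulo $\Q^*$) and confines $x$ to a lattice-like set, so the inner double integral is really a rapidly convergent sum over integers weighted by $f_\infty$ at the corresponding archimedean point. Concretely, at the finite places the support condition picks out, for the $\delta=\smat t{-1}10$ family, the integers $t$ dividing (a fixed power of) $rn$ and a quotient structure governed by the divisors $d\mid(r,n)$; at the archimedean place the matrix entry of $f_\infty$ has modulus $\asymp \det^{k/2}/(a^2+d^2+\cdots)^{k/2}$, which when integrated against $|y|^{\Re(s)-k/2}$ over $\R^*$ and $x$ over $\R$ produces a Beta-function factor $B(\sigma,k-\sigma)$ — this is exactly where the constraint $1<\sigma<k-1$ is needed, since the $y$-integral near $0$ and near $\infty$ converges precisely in that range.

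First I would dispose of the two "constant" cosets $\delta=1$ and $\delta=\smat0{-1}10$: here the finite-part support condition is a single congruence/divisibility constraint, the archimedean integral is a single (absolutely convergent, by $k>2$) integral of $|f_\infty|$ against $|y|^{\sigma-k/2}$ over a fixed region, and one gets a finite bound outright. Then I would treat the main family $\delta_t=\smat t{-1}10$, $t\in\Q^*$. Writing out $\smat{y^{-1}}{}{}1\delta_t\smat1x{}1$ and imposing membership in $\bigcup_m\smat1{-m/D}01Z(\Q^+)M(n,N)$ at every finite prime, I expect to find that $t$ must be a positive rational whose numerator and denominator are supported on primes dividing $rn$, that $f_{\fin}$ evaluates (via \eqref{fm}) to a character value of modulus $1$ times $\nu(N)$, and that the idele $y$ is pinned down up to $\Zhat^*$ by $t$, $r$, $n$. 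Summing the resulting archimedean integrals over the admissible $t$ gives a geometric-type series in the primes dividing $rn$ whose Euler product is $\prod_{p\mid rn}(1-p^{-\sigma})^{-1}(1-p^{-(k-\sigma)})^{-1}\le \zeta(\sigma)\zeta(k-\sigma)$, matching the shape of the claimed bound \eqref{Ebound}; the factor $\gcd(r,n)$ and the $D$-dependence $D^{k-\sigma-1/2}\varphi(D)$ come from the Gauss-sum normalization $1/\tau(\ol\chi)$ (recall $|\tau(\ol\chi)|=\sqrt D$) summed over the $\varphi(D)$ cosets indexed by $m$.

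The main obstacle is the bookkeeping for the $\delta_t$ family: one must carefully solve, prime by prime, the matrix membership condition $\smat{y_p^{-1}}{}{}1\smat t{-1}10\smat1x{}1\in \smat1{-m/D}01Z(\Q_p)M(n,N)_p$ — determining for which $t$ the coset is nonempty, which value of $m$ it corresponds to, and what constraint this places on $x_p$ and $y_p$ — and then assemble these local conditions into the global sum over $t$ and the global $x$- and $y$-integrals without double-counting. Once the admissible set of $t$ is identified and the archimedean integral $\int_{\R^*}\int_{\R}|f_\infty(\cdots)|\,|y|^{\sigma-k/2}\,dx\,d^*y$ is evaluated (a standard computation yielding $\frac{(k-1)}{4\pi}\cdot(\text{const})^{k}\cdot B(\sigma,k-\sigma)\cdot\cosh(\tfrac{\pi\tau}{2})$-type bound after handling the oscillatory factor $\theta_r(x)$ by absolute values), summing over $t$ and over the $m$-cosets yields the stated finite bound, establishing absolute convergence on $1<\Re(s)<k-1$. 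I would present the two easy cosets in a short paragraph and devote the bulk of the argument to the explicit local computation for $\delta_t$, deferring the precise evaluation of the archimedean integral to the next section where the error term $E$ is computed in detail.
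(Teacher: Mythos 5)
Your overall skeleton (split by the three Bruhat families, use the support of $f_{\fin}$ to constrain $y$ and $x$, bound the archimedean integral of $|f_\infty|$ by a Beta-function factor) matches the paper's strategy, and your treatment of the two single cosets $\delta=1$ and $\delta=\smat0{-1}10$ is fine. But there is a concrete error in the part that carries all the difficulty: you claim the admissible $t$ in the family $\delta_t=\smat t{-1}10$ are rationals supported on the primes dividing $rn$, so that the sum over $t$ becomes a finite Euler product $\prod_{p\mid rn}(1-p^{-\sigma})^{-1}(1-p^{-(k-\sigma)})^{-1}$. That is not what the support condition gives. Working prime by prime one finds that the integrand is nonzero exactly when $t=\tfrac{Nb}{nD}$ for some $b\in\Z\setminus\{0\}$ (with a further divisibility structure $d\mid b$, $\gcd(b/d,Nd^{(D)})\mid(r,n)$ inside each term), so the sum over $t$ is an \emph{infinite} sum over nonzero integers $b$, not a finite product over finitely many primes. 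The factors $\zeta(\sigma)\zeta(k-\sigma)$ in \eqref{Ebound} are genuine zeta values coming from $\sum_{a,d>0}a^{\sigma-k}d^{-\sigma}$ after writing $b=ad$; they are not an Euler product truncated to $p\mid rn$.

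Because of this misidentification, your argument never actually establishes convergence of $\sum_{t}I^{abs}_{\delta_t}(f)$, which is the whole point of the proposition and the true source of the restriction $1<\sigma<k-1$ (the individual archimedean integrals converge on the wider ranges $0<\sigma<k-1$, $1<\sigma<k$, and $0<\sigma<k$ respectively). What is needed, and what the paper supplies, is the pair of estimates $I^{abs}_{\delta_t}(f)_\infty\ll|t|^{\sigma-k}$ (obtained by relating $\delta$ to $\delta^{-1}$ via the self-adjointness $f_\infty(g)=\ol{f_\infty(g^{-1})}$ and quoting the archimedean computations of \cite{petrr}) together with $|I^{abs}_{\delta_t}(f)_{\fin}|\ll\sum_{d\mid b}d^{k-2\sigma}$. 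Combining these with the divisor bound $\#\{d:d\mid b\}\ll|b|^{\e}$ yields $\sum_{b\neq0}|b|^{-\sigma+\e}$ when $\sigma\le k/2$ and $\sum_{b\neq0}|b|^{\sigma-k+\e}$ when $\sigma>k/2$, and it is precisely these series that force $1<\sigma<k-1$. You should replace the Euler-product step with this two-sided estimate; as written, the proposal would "prove" convergence on a larger strip than is true.
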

\begin{proof}
 This is proven in just the same way as the analogous result in \cite{petrr}, Proposition 3.3.
 We outline the steps.  Because $f_{\fin}$ is bounded and compactly supported
  as a function of $y,x$, the argument hinges on bounding the infinite part
\[I_\delta^{abs}(f)_\infty = \int_0^\infty\int_{-\infty}^\infty|f_\infty(
  \smat{y^{-1}}{}{}1\delta\smat 1x01)|dx\,y^{\sigma-k/2-1}dy,\]
where $\sigma=\Re(s)$.
However, by \eqref{finfsa} the above is
\[= \int_0^\infty\int_{-\infty}^\infty|f_\infty(
  \smat 1{-x}01\delta^{-1}\smat{y}{}{}1)| dx\,y^{\sigma-k/2-1}dy.\]
Noting that the set of $\delta$ in \eqref{deltaset} is exactly the set of inverses
  of the $\delta$'s in \cite{petrr}, the above coincides with 
  $I_{\delta^{-1}}^{abs}(f)_\infty$
  considered in \S 3.3 there.   Hence those results give
\[I_\delta^{abs}(f) < \infty \quad\text{for } \begin{cases} \delta = 1\text{ and }
  0<\sigma <k-1\\\delta =\smat{}{-1}1{}\text{ and } 1<\sigma <k\\
  \delta = \smat t{-1}10\text{ and } 0<\sigma<k.\end{cases}\]
Furthermore, by Proposition 3.3 of \cite{petrr},
  for $\delta_t = \smat t{-1}10$ we have
\begin{equation}\label{tinfbound}
I_{\delta_t}^{abs}(f)_\infty \ll|t|^{\sigma-k} \quad\text{if }  0<\sigma<k.
\end{equation}
Thus, to complete the proof it remains to show that for $1<\sigma<k-1$,
\[\sum_{t\in\Q^*}I_{\delta_t}^{abs}(f)<\infty.\]
We will prove in Proposition \ref{deltat} below that the finite part 
  $I_{\delta_t}^{abs}(f)_{\fin}$
  vanishes unless $t=\tfrac{Nb}{nD}$ for some $b\in\Z-\{0\}$.
  We will show in \eqref{dtfin} that for such $t$,
\[|I_{\delta_t}^{abs}(f)_{\fin}|\le \frac{n^{\sigma-k/2}\nu(N)\varphi(D)\gcd(r,n)}
  {N^{2\sigma-k}D^{1/2}}\sum_{d|b} d^{k-2\sigma}.\]
Together with \eqref{tinfbound}, the fact that $\#\{d: d|b\}\ll |b|^\e$
  for $\e>0$, and using $d^{k-2\sigma}\le 1$ when $\sigma>k/2$,
  this gives the global estimate
\[\sum_{t\in\Q^*}I_{\delta_t}^{abs}(f)\ll_{N,D,n,\e}\begin{cases}
  \sum_{b\in \Z-\{0\}}|b|^{-\sigma+\e}&\text{if }\sigma\le k/2\\
  \sum_{b\in \Z-\{0\}}|b|^{\sigma-k+\e}&\text{if }\sigma> k/2.\end{cases}\]
This is finite when $1<\sigma<k-1$ and $\e$ is sufficiently small.
\end{proof}

Let $I_\delta(s)$ denote the double integral attached to $\delta$ in \eqref{geom}.
For
  $1<\Re(s)<k-1$ and each $\delta$ in \eqref{deltaset},
we need to compute $I_\delta(s)$.  It factorizes as
\[I_\delta(s)=I_\delta(s)_\infty I_\delta(s)_{\fin}=I_\delta(s)_\infty\prod_{p} I_\delta(s)_p,\]
where 
\[I_\delta(s)_\infty=\int_{\R^*}\int_{\R}f_\infty(\smat{y^{-1}}{}{}1\delta\smat1x01)
  \ol{\theta_\infty(rx)}\ol{\chi_\infty(y)}|y|^{s-k/2}dx\,\tfrac{dy}{|y|}\]
and likewise
\[I_\delta(s)_p=\int_{\Q_p^*}\int_{\Q_p}f_p(\smat{y^{-1}}{}{}1\delta\smat1x01)\ol{\theta_p(rx)}
  \ol{\chi_p(y)}|y|_p^{s-k/2}dx\,d^*y.\]
From the definition of $f_\infty$, the integrand for $I_\delta(s)_\infty$
   vanishes unless $y>0$.  Because $\chi_\infty$ is trivial on $\R^+$, it has
  no effect on $I_\delta(s)_\infty$ and can be removed.
Using \eqref{finfsa}, we find that
\begin{equation}\label{Iinf}I_\delta(s)_\infty 
  =\ol{I_{\delta^{-1}}'(\ol s)_\infty},
\end{equation}
where
\[I_{\delta^{-1}}'(s)_\infty=\int_0^\infty\int_{-\infty}^\infty f_\infty(
  \smat1{-x}01\delta^{-1}\smat y{}{}1)\theta_\infty(rx)y^{s-k/2}dx\,\tfrac{dy}y\]
is the archimedean factor computed in \cite{petrr}.

For convenience, when computing the finite part $I_\delta(s)_{\fin}$
  we will replace $y$ by $y^{-1}$.
  (It is a property of unimodular (e.g. abelian) groups that this does not affect 
  the value of the integral.)  Thus
\[I_\delta(s)_{\fin}=\int_{\Af^*}\int_{\Af}f_{\fin}(\smat y{}{}1\delta\smat 1x01)
  \ol{\theta_{\fin}(rx)}dx\,\chi^*(y)|y|_{\fin}^{k/2-s}d^*y.\]
Looking at determinants, by \eqref{fsup} the integrand is nonzero 
  only if $y=\tfrac{nu}{\ell^2}$ for some
  $\ell\in \Q^+$ and $u\in\Zhat^*$.  Thus, since $\Q^+\cap \Zhat^*=\{1\}$, the above is
\[=\sum_{\ell\in\Q^+}\Bigl(\frac n{\ell^2}\Bigr)^{s-k/2}
  \int_{\Zhat^*}\int_{\Af}f_{\fin}(\smat{\ell}{}{}{\ell}^{-1}
  \smat {\frac{nu}{\ell}}{}{}\ell\delta\smat 1x01)
  \ol{\theta_{\fin}(rx)}dx\,\chi^*(\tfrac{nu}{\ell^2})d^*u.\]
Note that $\chi^*_{\fin}(\tfrac{n}{\ell^2})=\chi^*(\tfrac{n}{\ell^2})=1$ since $\tfrac n{\ell^2}
  \in \Q^+$.  Likewise, the scalar factor of $\ell_{\fin}^{-1}$ pulls 
  out of $f_{\fin}$  as ${\psi^*_{\fin}(\ell^{-1})}=1$.  Hence
\begin{equation}\label{Ifin}
I_\delta(s)_{\fin}=\sum_{\ell\in\Q^+}\Bigl(\frac n{\ell^2}\Bigr)^{s-k/2}
  \int_{\Zhat^*}\int_{\Af}f_{\fin}(\smat{\frac{nu}{\ell}}{}{}\ell\delta\smat 1x01)
  \ol{\theta_{\fin}(rx)}dx\,\chi^*(u)d^*u.
\end{equation}

\begin{proposition}\label{delta1} When $\delta = 1$, the integral
\[I_1(s)=\int_{\A^*}\int_\A f(\mat{y}{xy}{0}{1})\ol{\theta(rx)}dx
   \,{\chi^*(y)}|y|^{k/2-s}d^*y
\]
converges absolutely on $0<\Re(s)<k-1$, and for such $s$ it is equal to
\begin{equation}\label{I1}
\frac{n^{1-k/2}}{e^{2\pi r}} \frac{2^{k-1}(2\pi rn)^{k-s-1}}{(k-2)!}\Gamma(s)
\nu(N) \sum_{d|(n,r)} d^{2s-k+1}\psi(\tfrac nd)\chi(\tfrac{rn}{d^2}).
\end{equation}
\end{proposition}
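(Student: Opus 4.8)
The plan is to compute $I_1(s)$ by factoring it into archimedean and non-archimedean parts, $I_1(s)=I_1(s)_\infty\prod_p I_1(s)_p$, and then evaluating each factor. For the $\delta=1$ case the relevant matrix is $\smat{y^{-1}}{}{}1\smat1x01=\smat{y^{-1}}{xy^{-1}}{}{1}$, equivalently (after the substitution $y\mapsto y^{-1}$ made in \eqref{Ifin}) a matrix of the shape $\smat{*}{*}{}{*}$ sitting in the Borel. First I would handle the archimedean factor: by \eqref{Iinf} we have $I_1(s)_\infty=\ol{I_{1}'(\ol s)_\infty}$, and $I_1'(s)_\infty$ is precisely the archimedean integral computed in \cite{petrr}; so I would simply quote that computation, which produces a gamma factor and a power of $2\pi r$ together with the $\frac{2^{k-1}}{(k-2)!}$ and $\Gamma(s)$ that appear in \eqref{I1}. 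The factor $n^{1-k/2}/e^{2\pi r}$ likewise comes out of matching normalizations as in \eqref{spec2}.

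Next I would compute the finite part using \eqref{Ifin}. Since for $\delta=1$ the matrix $\smat{nu/\ell}{}{}{\ell}\smat1x01=\smat{nu/\ell}{nux/\ell}{}{\ell}$ must land in $\Supp(f_{\fin})$ as described in \eqref{fsup}, and since $\delta=1$ forces us onto the coset indexed by $m$ with $m\equiv 0$, i.e. the unramified-looking part, the support condition forces $\ell\in\Z^+$, $\ell\mid n$ (roughly), and pins down the $x$-integral. Concretely, $f_{\fin}$ restricted to the identity coset is built from $f^n_p$ at $p\mid n$ and $f^\chi_p=f^n_p$ at $p\nmid nD$ and $f^\chi_p$ at $p\mid D$; the $x$-integral over $\Af$ against $\ol{\theta_{\fin}(rx)}$ together with $\int_{\Zhat^*}\chi^*(u)(\cdots)d^*u$ is exactly the kind of integral Lemma on Gauss sums \eqref{gauss} is designed to evaluate, except that here (with $\delta=1$, so no $1/D$ appears in the upper-right entry coming from $\delta$) the character $\chi^*$ simply contributes $\chi(nu/\ell^2)$-type values while the $\theta$-integral enforces an integrality/divisibility condition. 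Carrying this through, the sum over $\ell$ becomes a sum over $d\mid(n,r)$, with each term weighted by $d^{2s-k+1}$, $\psi(n/d)$ (from the lower-right entry $d$ of the Hecke matrix via \eqref{fnp}), and $\chi(rn/d^2)$ (from $\chi^*(u)$ evaluated at the surviving idele), and an overall $\nu(N)=\prod_p\nu_p(N)$ from the normalizing constants in the definitions of $f^n_p$ and $f^\chi_p$.

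I expect the main obstacle to be the bookkeeping in the finite-part computation: correctly tracking which primes contribute $f^n_p$ versus $f^\chi_p$, keeping the local Gauss-sum factors $\tau(\chi)_p$ in the denominators from \eqref{fchipm} straight so that they ultimately telescope via \eqref{taulocal} (note that for $\delta=1$ one expects the Gauss sums to cancel entirely, consistent with no $\tau(\chi)$ appearing in \eqref{I1}, in contrast with the $\delta_t$ terms), and verifying that the $x$-integral's support condition combines with the $\det$ condition $y=nu/\ell^2$ to yield exactly the divisors $d\mid(n,r)$ with the stated exponents. A clean way to organize this is prime-by-prime: at $p\nmid nrD$ the local integral is $1$; at $p\mid D$ the $\chi_p$ and $\theta_p$ combine via \eqref{gaussp} but the $\theta$-argument has no $1/D$ (since $\delta=1$), so in fact only $p\mid r$ or $p\mid n$ primes carry nontrivial local factors, and the product of local Hecke computations is the standard one giving $\sum_{d\mid(n,r)}d^{2s-k+1}\psi(n/d)$, twisted by $\chi$ on the surviving argument. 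Absolute convergence on $0<\Re(s)<k-1$ follows from Proposition \ref{conv} (the $\delta=1$ case), so once the value is computed the analytic statement is automatic. Assembling the archimedean gamma factor, the finite Dirichlet polynomial, the constant $\nu(N)$, and the prefactor $n^{1-k/2}e^{-2\pi r}$ yields \eqref{I1}.
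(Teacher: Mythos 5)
Your overall strategy---factor $I_1(s)$ into archimedean and finite parts, quote the archimedean computation from \cite{petrr} via \eqref{Iinf}, and evaluate the finite part from \eqref{Ifin} using the support conditions and the Gauss-sum lemma \eqref{gauss}---is exactly the paper's, and you correctly predict the shape of the answer: the divisor condition $d\mid(n,r)$, the weights $d^{2s-k+1}\psi(\tfrac nd)\chi(\tfrac{rn}{d^2})$, the overall $\nu(N)$, and the complete cancellation of the Gauss sums.

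However, two assertions in your sketch of the finite part are wrong and, taken literally, would break the computation at $p\mid D$. First, there is no coset ``indexed by $m\equiv 0$'' in the support: the union in \eqref{fchisupp} runs over $(m,D)=1$ only, and for $\delta=1$ \emph{every} such coset contributes, since $\smat{nu/\ell}{x}{0}{\ell}$ lies in the $m$-coset exactly when $x+\tfrac{\ell m}{D}\in\Zhat$, a nonempty condition on $x$ for each $m$. Second, the $\theta$-argument most certainly does acquire a $1/D$: translating $x\mapsto x-\tfrac{\ell m}{D}$ to land in the $m$-coset makes the additive character pick up $\theta_{\fin}(\tfrac{r\ell^2 m}{nDu})$. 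This is essential. Without the $1/D$ the $u$-integral at $p\mid D$ would be $\int_{\Z_p^*}\chi_p(u)\,d^*u=0$ by orthogonality of the ramified character $\chi_p$, and the term would vanish; with it, \eqref{gauss} produces $\chi(\tfrac{rnm}{d^2})\tfrac{\tau(\ol\chi)}{\varphi(D)}$, the sum over $m$ of $\ol{\chi(m)}\chi(m)$ restores the $\varphi(D)$, and the resulting $\tau(\ol\chi)$ cancels the one in the denominator of \eqref{fchidef}---which is precisely the cancellation you anticipated, but whose mechanism is the $1/D$ you discarded. (Compare \eqref{I1p}: the local factor at $p\mid D$ is $\chi_p(r)$, not $1$.) With these two points corrected, your plan carries through exactly as in the paper.
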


\begin{proof}
  The absolute convergence was proven in Proposition \ref{conv}.
We factorize the integral as $I_1(s)=I_1(s)_\infty I_1(s)_{\fin}$.  By \eqref{Iinf}
  and the proof of Proposition 3.4 of \cite{petrr}, we have
\begin{equation}\label{I1inf}
I_1(s)_\infty= \frac{2^{k-1}(2\pi r)^{k-s-1}}{(k-2)!\,e^{2\pi r}}\Gamma(s).
\end{equation}
Now consider the finite part, which by \eqref{Ifin} is
\[I_1(s)_{\fin}=\sum_{\ell\in\Q^+}\Bigl(\frac n{\ell^2}\Bigr)^{s-k/2}
  \int_{\Zhat^*}\int_{\Af}f_{\fin}(\mat{\frac{nu}{\ell}}{\frac{xnu}\ell}0\ell)
  \ol{\theta_{\fin}(rx)}dx\,\chi^*(u)d^*u.\]
Replacing $x$ by $\tfrac{\ell x}{nu}$, the above is
\[=\sum_{\ell\in\Q^+} \frac{n^{s-k/2+1}}{\ell^{2s-k+1}}
  \sum_{m\in(\Z/D\Z)^*}\int_{\Zhat^*}\int_{\Af}f_{m}(\mat {\frac{nu}{\ell}}{x}0\ell)\,
  \ol{\theta_{\fin}(\tfrac{r\ell x}{nu})}dx\,\chi^*(u)d^*u\]
\[=\sum_{\ell\in\Q^+} \frac{n^{s-k/2+1}}{\ell^{2s-k+1}}
  \sum_{m}\int_{\Zhat^*}\int_{\Af}f_{m}(\smat1{-m/D}01
  \mtxs{\frac{nu}{\ell}}{x+\frac{\ell m}D}0\ell)\,
  \ol{\theta_{\fin}(\tfrac{r\ell x}{nu})}dx\,\chi^*(u)d^*u.\]
  By \eqref{fsup}, the integrand is nonzero if and only if 
  $\tfrac{nu}\ell,\ell,x+\tfrac{\ell m}D\in\Zhat$.  Replacing $x$ by $x-\tfrac{\ell m}D$,
  we obtain
\[I_1(s)_{\fin}=\frac{\nu(N)}{\tau(\ol{\chi})}
\sum_{\ell|n} \frac{n^{s-k/2+1}}{\ell^{2s-k+1}}\psi^*(\ell_N)
  \sum_{m}\ol{\chi(m)}\int_{\Zhat^*}\int_{\Zhat}
  \ol{\theta_{\fin}(\tfrac{r\ell(x-\frac{\ell m}D)}{nu})}dx\,\chi^*(u)d^*u\]
\[=\frac{\nu(N)}{\tau(\ol{\chi})}
\sum_{\ell|n} \frac{n^{s-k/2+1}}{\ell^{2s-k+1}}\psi(\ell)
  \sum_{m}\ol{\chi(m)} \int_{\Zhat^*} \theta_{\fin}(\tfrac{r\ell^2 m}{nDu})
\int_{\Zhat} \ol{\theta_{\fin}(\tfrac{r\ell x}{nu})}dx\,\chi^*(u)d^*u.\]
The integral over $\Zhat$ evaluates to $1$ if $\tfrac n\ell|r$, and $0$ otherwise.
  Setting $d=\tfrac n\ell$, the above is
\[=\frac{\nu(N)}{\tau(\ol{\chi})}
\sum_{d|(n,r)} \frac{n^{s-k/2+1}}{(\frac nd)^{2s-k+1}}\psi(\tfrac nd)
  \sum_{m}\ol{\chi(m)} \int_{\Zhat^*} \theta_{\fin}(\tfrac{rn m}{d^2Du})
\chi^*(u)d^*u\]
\[=\frac{\nu(N)n^{k/2-s}}{\tau(\ol{\chi})}
\sum_{d|(n,r)} d^{2s-k+1}\psi(\tfrac nd)
  \sum_{m}\ol{\chi(m)} \int_{\Zhat^*} \theta_{\fin}(\tfrac{rn m}{d^2D}u)
\ol{\chi^*(u)}d^*u\]
\[=\frac{\nu(N)n^{k/2-s}}{\tau(\ol{\chi})}
\sum_{d|(n,r)} d^{2s-k+1}\psi(\tfrac nd)
  \sum_{m}\ol{\chi(m)} \chi(\tfrac{rnm}{d^2})\frac{\tau(\ol\chi)}{\varphi(D)}\]
\[={\nu(N)n^{k/2-s}} \sum_{d|(n,r)} d^{2s-k+1}\psi(\tfrac nd)\chi(\tfrac{rn}{d^2}).\]
Passing to the third line, we applied \eqref{gauss}.  Multiplying by \eqref{I1inf}
  gives the result.
\end{proof}
Although we computed the orbital integral globally, it may be of interest to
  know the value of the local orbital integrals, for example if one wishes
  to compute the analogous trace formula over a number field, or use a test function
  which differs from ours at a finite number of places.  Letting
  \[I_1(s)_p=\int_{\Q_p^*}\int_{\Q_p}f_p(\smat{y}{xy}{0}1)\ol{\theta_p(rx)}
  \chi_p(y)|y|_p^{k/2-s}dx\,d^*y,\]
 by calculations very similar to the above, we find, for $r\in\Z_p$,
\begin{equation}\label{I1p}
I_1(s)_p=\begin{cases}\chi_p(r)&(p|D)\\\nu(p^{N_p})&(p|N)\\
\ds(p^{n_p})^{k/2-s}\sum_{d_p=0}^{\min(r_p,n_p)}(p^{d_p})^{2s-k+1}\psi_p(\tfrac
  {p^{d_p}}{p^{n_p}})\chi_p(\tfrac{p^{2d_p}}{p^{n_p}})&(p|n)\\
  1&(p\nmid nND).
\end{cases}
\end{equation}

\begin{proposition}\label{deltaw} When $\delta = \smat{}{-1}1{}$, the integral
\[I_\delta(s)=\int_{\A^*}\int_\A f(\mat{0}{-y}{1}{x})\ol{\theta(rx)}dx
   \,{\chi^*(y)}|y|^{k/2-s}d^*y
\]
converges absolutely on $1<\Re(s)<k$, and for such $s$ it vanishes unless $N=1$.
 When $N=1$ (so $k$ is even by \eqref{k}),
\begin{equation}\label{Iw}
I_\delta(s)=\frac{n^{1-k/2}}{e^{2\pi r}}
\frac{2^{k-1}(2\pi rn)^{s-1}}{(k-2)!}\Gamma(k-s)\frac{i^k}{D^{2s-k}}
\frac{\tau(\chi)^2}D
  \sum_{d|(r,n)}d^{k-2s+1} \ol{\chi(\tfrac{rn}{d^2})}.
\end{equation}
\end{proposition}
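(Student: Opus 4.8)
The plan is to mimic the structure of the proof of Proposition \ref{delta1}: factor $I_\delta(s)=I_\delta(s)_\infty\,I_\delta(s)_{\fin}$, compute each factor separately, and multiply. For the archimedean factor, I would invoke \eqref{Iinf}, which expresses $I_\delta(s)_\infty$ as the complex conjugate of $I_{\delta^{-1}}'(\ol s)_\infty$. Since $\delta=\smat{}{-1}1{}$ satisfies $\delta^{-1}=\smat 0110$ (up to sign / up to the center), this is precisely the archimedean orbital integral for the Weyl element computed in Proposition 3.4 of \cite{petrr}; quoting that result gives $I_\delta(s)_\infty=\frac{2^{k-1}(2\pi r)^{s-1}}{(k-2)!\,e^{2\pi r}}\Gamma(k-s)$ (the functional-equation-type symmetry $s\leftrightarrow k-s$ relative to \eqref{I1inf} being exactly what one expects, as this $\delta$ is the term responsible for the dual term in \eqref{sum}). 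One must be slightly careful about factors of $i^k$ coming from $j(g,z)$ and the sign in $\theta_\infty$; these are what produce the $i^k$ in \eqref{Iw}, and tracking them is the only subtlety on the archimedean side.

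The bulk of the work is the finite part. Starting from \eqref{Ifin}, I would substitute $\delta=\smat{}{-1}1{}$, compute $\smat{nu/\ell}{}{}\ell\smat{}{-1}1{}\smat1x01 = \smat{}{}{}{}$-type products explicitly, and read off from the support description \eqref{fsup} when $f_{\fin}$ is nonzero on the resulting matrix. The key point is that the lower-left entry of $\smat{nu/\ell}{}{}\ell\smat{}{-1}1{}\smat1x01$ must lie in $N\Zhat$ after clearing the $\smat1{-m/D}01$ factor; because $N$ and $D$ are coprime and the matrix also must have determinant in $n\Zhat^*$ times a square, this forces a congruence that, when $N>1$, cannot be met — giving the vanishing claim. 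When $N=1$ the constraint disappears, and the remaining integral over $\Zhat$ and $\Zhat^*$ collapses (exactly as in Proposition \ref{delta1}) to a sum over $d\mid(r,n)$ with a Gauss-sum factor. Here, however, one applies \eqref{gauss} \emph{twice} — once from the $m$-sum built into $f_{\fin}$ via \eqref{fm}, and once from an $x$-integration that no longer evaluates to a plain indicator but to a character sum — which is what produces the $\tau(\chi)^2/D$ and the $D^{k-2s}$ in \eqref{Iw}. I would organize this as: (i) reduce the $\ell$-sum to $\ell\mid n$ (divisor condition), (ii) perform the inner $\Zhat$-integral to get the condition $\tfrac n\ell\mid r$ and set $d=\tfrac n\ell$, (iii) perform the $m$-sum and $u$-integral via two applications of \eqref{gauss}, (iv) reassemble, converting $\ol{\chi}$-Gauss sums into $\chi$-Gauss sums and simplifying $\tau(\ol\chi)$, $\tau(\chi)$ using $\tau(\chi)\tau(\ol\chi)=\chi(-1)D$ and $|\tau(\ol\chi)|^2=D$.

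The main obstacle I anticipate is the careful bookkeeping of the two intertwined Gauss sums together with the powers of $D$ and the sign $i^k=\chi(-1)\cdot(\pm1)$: the substitution that shifts $x$ by $\tfrac{\ell m}{D}$ (as in Proposition \ref{delta1}) interacts with the Weyl element differently than with $\delta=1$, so the phase $\theta_{\fin}$ picks up a more complicated argument, and one must check that the resulting character sum over $m$ and over $u\in\Zhat^*$ indeed factors as two independent Gauss sums rather than a single Kloosterman-type sum. A secondary point is matching the normalization constant $\nu(N)$: since $\delta=\smat{}{-1}1{}$ only contributes when $N=1$, we have $\nu(N)=1$ there, so it does not appear in \eqref{Iw}, but one should confirm no stray $\nu$-factor survives. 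Once these are handled, multiplying $I_\delta(s)_\infty$ by $I_\delta(s)_{\fin}$ and comparing with the second term of \eqref{sum} (via \eqref{spec2}, which carries the prefactor $\tfrac{n^{1-k/2}}{e^{2\pi r}}$ and the functional equation \eqref{fe}) gives \eqref{Iw}, completing the proof.
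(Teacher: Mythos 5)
Your overall architecture --- factor $I_\delta(s)=I_\delta(s)_\infty I_\delta(s)_{\fin}$, quote \cite{petrr} via \eqref{Iinf} for the archimedean factor, and analyze the support of $f_{\fin}$ for the finite factor --- is exactly the paper's. But your concrete plan for the finite part transplants the $\delta=1$ bookkeeping in a way that would not reach \eqref{Iw}. For the Weyl element, the support condition \eqref{fsup} forces at the primes $p\mid D$ that the upper-left entry $\ell m/D$ of $\smat1{m/D}01\smat0{-nu/\ell}{\ell}{\ell x}$ be integral, i.e. $D\mid\ell$; one writes $\ell=Dd$, and it is this substitution in $(n/\ell^2)^{s-k/2}$, together with the measure scalings from $x\mapsto x/\ell$ and $x\mapsto c_m+Dx$, that produces the factor $D^{k-2s}$ --- not the Gauss sums, as you propose. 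Indeed, two applications of \eqref{gauss} could only yield a factor of magnitude $|\tau(\chi)|^2=D$, independent of $s$, so they cannot account for $D^{k-2s}$. Moreover the divisor parameter in the final sum is $d=\ell/D$, with $d\mid n$ coming from a coset-intersection analysis of the upper-right entry (condition \eqref{mdx}, which pins $x$ to a single coset $c_m+D\Zhat$ with $mdc_m\equiv nu\bmod dD\Zhat$) and $d\mid r$ from the recentred $\Zhat$-integral; your steps (i)--(ii), which set $d=\tfrac n\ell$ as in Proposition \ref{delta1}, would give the exponent $d^{2s-k+1}$ instead of the correct $d^{k-2s+1}$. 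Finally, \eqref{gauss} is applied only once, to the $u$-integral: using $c_m\equiv\ol m(\tfrac nd)u\bmod D\Zhat$, the $m$-sum collapses to $\varphi(D)$, and the second $\tau$ in $\tau(\chi)^2/D$ comes from the normalization $1/\tau(\ol\chi)$ already built into $f^\chi$ in \eqref{fm}, via $\tau(\chi)/(\chi(-1)\tau(\ol\chi))=\tau(\chi)^2/D$. There is no Kloosterman-type obstruction to worry about.

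Two smaller points. The archimedean factor is $I_\delta(s)_\infty=\frac{2^{k-1}(2\pi r)^{s-1}}{e^{2\pi r}(k-2)!\,i^k}\Gamma(k-s)$, from Proposition 3.5 (not 3.4) of \cite{petrr}: the $i^k$ appearing in \eqref{Iw} lives entirely in the archimedean factor (note $i^{-k}=i^k$ since $k$ is even when $N=1$), so your stated formula for $I_\delta(s)_\infty$ is off by $i^k$ and your suggestion that $i^k=\chi(-1)\cdot(\pm1)$ emerges from the finite-part Gauss sums is not how the bookkeeping closes. And the vanishing for $N>1$ is not a determinant or congruence obstruction: the lower-left entry forces $N\mid\ell$, while for $p\mid N$ (hence $p\nmid D$, so there is no $\alpha_m$ correction at $p$) the upper-right entry $-nu/\ell$ must be $p$-integral, forcing $\ord_p(\ell)\le\ord_p(n)$; together these contradict $(n,N)=1$ unless $N=1$.
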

\noindent{\em Remark:} Comparing with the identity term \eqref{I1} when $N=1$, we see that 
  \[ \frac{i^k}{D^{2s-k}} \frac{\tau(\chi)^2}D I_1(k-s,\ol\chi)=I_\delta(s,\chi),\]
  mirroring the functional equation \eqref{fe} on the spectral side.

\begin{proof}
  The absolute convergence was proven in Proposition \ref{conv}.
By \eqref{Iinf} and the proof of Proposition 3.5 of \cite{petrr}, we have
\begin{equation}\label{Idinf}
I_\delta(s)_\infty = \frac{2^{k-1}(2\pi r)^{s-1}}{e^{2\pi r}(k-2)!\,i^k}\Gamma(k-s).
\end{equation}
Now consider the finite part \eqref{Ifin}:
\[I_\delta(s)_{\fin}=\sum_{\ell\in\Q^+}\Bigl(\frac n{\ell^2}\Bigr)^{s-k/2}
  \int_{\Zhat^*}\int_{\Af}f_{\fin}(\mat0{-\frac{nu}{\ell}}\ell{x\ell})
  \ol{\theta_{\fin}(rx)}dx\,\chi^*(u)d^*u.\]
Because the above matrix has determinant $nu\in n\Zhat^*$, we see from \eqref{fsup} that
  \begin{equation}\label{ellp}
\mat0{-\frac{nu_p}\ell}\ell{\ell x_p}\in M(n,N)_p \quad\text{for all }p\nmid D
\end{equation}
when $\ell x_p\in \Z_p$.
  Likewise, 
   we can assume that for some $m\in(\Z/D\Z)^*$,
\begin{equation}\label{ellD}
\mat1{\frac mD}01 \mat0{-\frac{nu}\ell}\ell{x\ell}=\mat{\frac{\ell m}D}{-\frac{nu}\ell+
  \frac{m\ell x}D}{\ell}{\ell x}\in M(n,N).
\end{equation}
The latter implies that $x\ell\in\Zhat$ and $\ell\in N\Z^+$. If $p|N$,
  then by considering the upper right entry of \eqref{ellp}, we have
  $\ord_p(n)\ge \ord_p(\ell)\ge \ord_p(N)>0$.  This contradicts $(n,N)=1$, and therefore
  we may assume that $N=1$.
   From the upper left entry of \eqref{ellD}, we
  see that $D|\ell$.  Write $\ell=Dd$ for $d\in\Z^+$.  Replacing $x$ by $\tfrac x\ell
  =\tfrac x{Dd}$, the measure is scaled by $|Dd|_{\fin}^{-1}=Dd$,
   so $I_\delta(s)_{\fin}$ is equal to
\[\sum_{d\in\Z^+}\tfrac {n^{s-k/2}}{(Dd)^{2s-k-1}}
  \sum_{m\in (\Z/D\Z)^*}\int_{\Zhat^*}\int_{\Zhat}f_m(\smat1{-\frac mD}01
  \mtxs{md}{\frac{-nu}{Dd}+\frac{mx}D}{Dd}{x})
  \ol{\theta_{\fin}(\tfrac{rx}{Dd})}dx\,\chi^*(u)d^*u.\]
From the upper right-hand entry of \eqref{ellD}, 
$-\tfrac{nu}{Dd}+\tfrac{mx}D\in\Zhat$.  Since $x\in\Zhat$, this means
\begin{equation}\label{mdx}
mdx\in md\Zhat\cap (nu+Dd\Zhat).
\end{equation}
Generally, it is not hard to show that for any $h,j,k\in\Zhat$,
\begin{equation}\label{coset}
h\Zhat\cap(j+k\Zhat)=\begin{cases} hc+\frac{hk}{\gcd(h,k)}\Zhat&\text{if }\gcd(h,k)|j\\
\emptyset&\text{if }\gcd(h,k)\nmid j,\end{cases}
\end{equation}
where $c\in\Z$ is any solution to $hc\equiv j\mod k\Zhat$.
Applying this to \eqref{mdx}, we have $\gcd(h,k)=\gcd(md,Dd)=d$, so
  the set in \eqref{mdx} is nonempty if and only if $d|n$.  Assuming this holds,
  the range of $x$ is determined by
\[x\in c_m+D\Zhat,\]
where $c_m$ is any positive integer satisfying
\begin{equation}\label{cm}
mdc_m\equiv nu\mod dD\Zhat.
\end{equation}
For such $x$, the value of $f_m$ is identically equal to 
  $\tfrac{\ol{\chi(m)}}{\tau(\ol\chi)}$ since $N=1$ (and so $\psi=1$).
Replace $x$ by $c_m+Dx$.  This changes the measure by a factor of $|D|_{\fin}=D^{-1}$, and
\[  I_\delta(s)_{\fin}=
\frac{n^{s-k/2}}{\tau(\ol\chi)D^{2s-k}}\sum_{d|n}d^{k-2s+1}
  \sum_{m}\ol{\chi(m)}\int_{\Zhat^*} \int_{\Zhat}
  \ol{\theta_{\fin}(\tfrac{r(c_m+Dx)}{Dd})}dx\,\chi^*(u)d^*u.\]
The inner integral $\int_{\Zhat}\ol{\theta_{\fin}(\tfrac{rx}d)}dx$ is equal to $1$ if
  $d|r$, and $0$ otherwise.  Therefore the above is
\[=\frac{n^{s-k/2}}{\tau(\ol\chi)D^{2s-k}}\sum_{d|(r,n)}d^{k-2s+1}
  \sum_{m}\ol{\chi(m)}\int_{\Zhat^*}
  \ol{\theta_{\fin}(\tfrac{rc_m}{Dd})}\chi^*(u)d^*u.\]
Since $d|(r,n)$, \eqref{cm} is equivalent to $c_m\equiv \ol m(\tfrac nd)u\mod D\Zhat$,
  where $m\ol m\equiv 1\mod D$.  Using this along with \eqref{gauss}, we find:
\[I_\delta(s)_{\fin}=\frac{n^{s-k/2}}{\tau(\ol\chi)D^{2s-k}}\sum_{d|(r,n)}d^{k-2s+1}
  \sum_{m}\ol{\chi(m)}\int_{\Zhat^*}
  {\theta_{\fin}\left(\frac{-(\frac rd)(\frac nd)\ol m u}{D}\right)}\chi^*(u)d^*u\]
\[=\frac{n^{s-k/2}}{\tau(\ol\chi)D^{2s-k}}\sum_{d|(r,n)}d^{k-2s+1}
  \sum_{m}\ol{\chi(m)}\chi(-1)\ol{\chi(\tfrac{rn}{d^2})}\chi(m)\frac{\tau(\chi)}{\varphi(D)}\]
\begin{equation}\label{Iwfin}
=\frac{n^{s-k/2}}{D^{2s-k}}\frac{\tau(\chi)}{\chi(-1)\tau(\ol{\chi})}
  \sum_{d|(r,n)}d^{k-2s+1} \ol{\chi(\tfrac{rn}{d^2})}
\end{equation}
Since $\chi$ is primitive, we have $\tau(\chi)\chi(-1)\tau(\ol\chi)=
  \tau(\chi)\ol{\tau(\chi)}=D$.  Therefore
$\frac{\tau(\chi)}{\chi(-1)\tau(\ol\chi)}$ $=\frac{\tau(\chi)^2}D$.  Using this and
  multiplying the above by \eqref{Idinf}, equation \eqref{Iw} follows.
\end{proof}

We state here the value of the local orbital integrals
\[I_\delta(s)_p=\int_{\Q_p^*}\int_{\Q_p}f_p(\smat0{-y}1x)\ol{\theta_p(rx)}
  \chi_p(y)|y|_p^{k/2-s}dx\,d^*y.\]
By local calculations similar to the above, we find, for $\delta=\smat0{-1}10$,
\begin{equation}\label{Iwp}
I_\delta(s)_p=\begin{cases}(p^{D_p})^{k-2s}\ol{\psi_p(D)}\,\ol{\chi_p(D^2)}
  \,\ol{\chi_p(-r)}\frac{\tau(\chi)_p}{\tau(\ol\chi)_p}&(p|D)\\
  0&(p|N)\\
  \ds(p^{n_p})^{s-k/2}\sum_{d_p=0}^{\min(r_p,n_p)}(p^{d_p})^{k-2s+1}\ol{\psi_p(p^{d_p})}
  \chi_p(\tfrac{p^{n_p}}{p^{2d_p}})&(p|n)\\
  1&(p\nmid nND).
\end{cases}
\end{equation}
Here, recall that $\tau(\chi)_p=\chi_p(\tfrac{D}{p^{D_p}})\tau(\chi_p)$ as in \eqref{taup}.
Using \eqref{taulocal}, it is straightforward to show that the product 
  of the above over all $p$ agrees with \eqref{Iwfin} when $N=1$.

\section{Computation of $I_{\delta_t}(s)$}

In this section we will prove the following.

\begin{proposition}\label{deltat} When $\delta_t = \smat{t}{-1}1{0}$, the integral
\[I_{\delta_t}(s)=\int_{\A^*}\int_\A f(\mat{yt}{yxt-y}{1}{x})\ol{\theta(rx)}dx
   \,{\chi^*(y)}|y|^{k/2-s}d^*y
\]
converges absolutely on the strip $0<\sigma <k$, where $\sigma=\Re(s)$.
    The integrand vanishes unless $t=\tfrac{Nb}{nD}$ for
  $b\in\Z-\{0\}$.  When $1<\sigma<k-1$, the sum $\sum_{t\in\Q^*}I_{\delta_t}(s)$
  is absolutely convergent, and $E:=\frac{e^{2\pi r}}{\nu(N)n^{1-k/2}}\sum I_{\delta_t}(s)$
  is equal to
\[ \hskip -.3cm \frac{(4\pi rn)^{k-1}\varphi(D)\psi(nD)e^{i\pi s/2}}{N^sD^{s-k}(k-2)!\,\tau(\ol\chi)}
\hskip -.4cm\sum_{a\neq 0,d>0\text{ sat.}\eqref{aD},\atop{\gcd(a,Nd^{(D)})|\gcd(r,n)}}
\hskip -.4cm\frac{a^{s-k}\gcd(a,Nd^{(D)})}{d^s\psi(a)e^{\frac{2\pi i r\ell }{ad_D}}}J_\chi(a,d)
{}_1f_1(s;k;\tfrac{-2\pi irnD}{Nad}),\]
where: $a^s=e^{-i\pi s}|a|^s$ if $a<0$, 
$d^{(D)}=\prod_{p\nmid D}p^{d_p}$ is the prime-to-$D$ part of $d$,
 and similarly for $d=d^{(D)}d_D$, $\ell$ is any integer satisfying
  $Nd^{(D)}\ell\equiv -nD\mod ad_D$, $J_\chi$ is a product of certain explicit local
  factors of absolute value $\le 1$ given in \eqref{Jchi}, 
\begin{equation}\label{1f1}
_1f_1(s,k;w)=\frac{\Gamma(s)\Gamma(k-s)}{\Gamma(k)}
  {}_1\mathrm{F}_1(s;k;w)=\int_0^1 e^{wx}x^{s-1}(1-x)^{k-s-1}dx
\end{equation}
 for $\Re(k)>\Re(s)>0$,
and writing $a_p,d_p$
  for the $p$-adic valuations of $a,d$, we have
\begin{equation}\label{aD}
p|D\implies \begin{cases} a_p=D_p&\text{if }d_p> D_p\\
  a_p\ge D_p&\text{if }d_p=D_p\\
  a_p=d_p&\text{if }0\le d_p<D_p.
\end{cases}
\end{equation}
\end{proposition}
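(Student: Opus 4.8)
The plan is to mimic the structure of Propositions \ref{delta1} and \ref{deltaw}, factoring $I_{\delta_t}(s) = I_{\delta_t}(s)_\infty \prod_p I_{\delta_t}(s)_p$ and computing each piece, but the decisive new feature here is that the finite part does not vanish and the archimedean part is a genuine confluent hypergeometric integral rather than an elementary Gamma factor. First I would treat the archimedean factor: by \eqref{Iinf} it equals $\ol{I'_{\delta_t^{-1}}(\ol s)_\infty}$, which is exactly the integral computed in Proposition 3.3 (and its successors) of \cite{petrr}; there $\delta_t^{-1}$ produces precisely the integral representation \eqref{1f1} of ${}_1\mathrm{F}_1$, together with the explicit prefactor $(4\pi rn)^{k-1}(k-2)!^{-1}e^{i\pi s/2}$-type constants and the exponential $e^{-2\pi i r\ell/(ad_D)}$ coming from a translation in the $x$-integral. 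The absolute convergence on $0<\sigma<k$ is inherited from \eqref{tinfbound}. So the archimedean step is essentially a citation.

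The heart of the argument is the finite part. Starting from \eqref{Ifin} with $\delta=\delta_t$, one has
\[
I_{\delta_t}(s)_{\fin}=\sum_{\ell\in\Q^+}\Bigl(\tfrac{n}{\ell^2}\Bigr)^{s-k/2}
  \int_{\Zhat^*}\int_{\Af}f_{\fin}\!\left(\smat{\frac{nu}{\ell}}{}{}\ell\,\smat t{-1}10\,\smat1x01\right)
  \ol{\theta_{\fin}(rx)}\,dx\,\chi^*(u)\,d^*u,
\]
and I would work prime-by-prime. For $p\nmid nND$ the local integral is $1$. For $p\mid N$ one gets a congruence condition forcing $\ell\in N\Z_p$ and an evaluation against $\psi_p$; for $p\mid n$ one obtains the divisor sum over $d_p\le\min(r_p,n_p)$ familiar from \eqref{I1p}, \eqref{Iwp}. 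The genuinely new local computation is at $p\mid D$: conjugating by $\smat1{-m/D}01$ and matching the matrix into $\Supp(f_p^\chi)$ pins down the $p$-adic valuations of the ``new variable'' $a$ (the $(1,1)$-entry after normalization) in terms of $d_p$ and $D_p$ — this is exactly the trichotomy \eqref{aD} — and the residual integral over $\Z_p^*$ against $\chi_p\cdot\theta_p$ is evaluated by the local Gauss-sum identity \eqref{gaussp}. Packaging these $p\mid D$ contributions, with the Gauss-sum prefactors absorbed into $\tau(\ol\chi)=\prod_{p\mid D}\tau(\ol\chi)_p$ via \eqref{taulocal}, produces the factor $J_\chi(a,d)$ of modulus $\le 1$ together with the constraint $\gcd(a,Nd^{(D)})\mid\gcd(r,n)$ (the last from the $p\mid n$ and unramified $x$-integrals collapsing). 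Reindexing the double sum over $\ell$ and $m\bmod D$ in terms of the pair $(a,d)$ — with $t=Nb/(nD)$ emerging from a determinant/denominator bookkeeping, forcing $b\in\Z\setminus\{0\}$ — and combining with the archimedean factor gives the stated formula for $E$. Finally, the absolute convergence of $\sum_t I_{\delta_t}(s)$ on $1<\sigma<k-1$ and the bound \eqref{Ebound} follow by feeding $|{}_1f_1(s;k;w)|\le B(\sigma,k-\sigma)$, $|J_\chi|\le 1$, and $|\tau(\ol\chi)|=\sqrt D$ into the sum, exactly as sketched in the proof of Proposition \ref{conv}.

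I expect the main obstacle to be the $p\mid D$ local computation and the subsequent reindexing: one must carefully track how the coset representative $m\bmod D$ interacts with the Bruhat-type entry $t$, extract the correct valuation conditions \eqref{aD} without over- or under-counting, and verify that the accumulated Gauss sums telescope into the single global $\tau(\ol\chi)$ with the advertised phase $\psi(nD)e^{i\pi s/2}$ and the $D^{k-s}$ power. The bookkeeping linking the summation variables $(\ell,m)$ to $(a,d)$, and the appearance of the auxiliary integer $\ell$ with $Nd^{(D)}\ell\equiv -nD\bmod ad_D$ in the exponential, is where sign and normalization errors are easiest to make; everything else is a routine, if lengthy, adaptation of the methods already used for $\delta=1$ and $\delta=\smat0{-1}10$ and of the archimedean computations in \cite{petrr}.
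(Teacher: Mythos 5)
Your overall strategy coincides with the paper's: factor $I_{\delta_t}(s)$ into archimedean and finite local pieces, import the archimedean confluent-hypergeometric evaluation from \cite{petrr} via \eqref{Iinf}, extract the support conditions at $p\mid D$ to obtain the valuation trichotomy \eqref{aD} and at $p\nmid D$ to force $t=Nb/(nD)$ with $b\in\Z$ and $d\mid b$, and then reassemble and reindex with $b=ad$. However, one step would fail as you describe it. You claim the residual $p\mid D$ integral ``is evaluated by the local Gauss-sum identity \eqref{gaussp}.'' When $p\mid d$ this is not possible: after the substitutions the surviving integral is $\int_{U_{b,d,1}}\chi_p(Nu)\,\ol{\theta_p(\tfrac{rDnu}{Nub+Nd^2})}\,d^*u$, in which the argument of the additive character is a nonconstant \emph{rational} (not linear) function of $u$, and the domain $U_{b,d,1}$ is in general a proper subset of $\Z_p^*$ (for $0<d_p<D_p$ it is a single coset $-\tfrac{d^2}{b}+\tfrac Dd\Z_p^*$). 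So \eqref{gaussp} does not apply. The paper deliberately does \emph{not} evaluate this integral: it defines $J_p(\tfrac bd,d)$ to be exactly this integral (see \eqref{IdtD} and \eqref{Jchi}) and uses only the trivial bound $|J_p|\le 1$, which is all the proposition asserts. Your endpoint is therefore still reachable, but only by abandoning the claimed closed-form evaluation. (For $p\nmid d$ the sum over $m$ collapses to a single term and one gets the elementary value $\chi_p(\tfrac{-Nd}{b/d})/\tau(\ol\chi)_p$, again with no Gauss sum needed.)

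A second, smaller inaccuracy: the phase $e^{-2\pi i r\ell/(ad_D)}$ does not arise from the archimedean $x$-integral alone. The archimedean factor \eqref{Idtinf} contributes $e^{2\pi i rnD/(Nb)}$, while the finite part \eqref{dtfin} contributes $e^{-2\pi i rc/(Nd)}$ through $\ol{\theta_{\fin}(\tfrac{rc}{Nd})}=\theta_\infty(\tfrac{rc}{Nd})$, where $c$ solves the congruences \eqref{c}; only the \emph{product} of the two simplifies, via $(\tfrac bd)c=nD+\ell Nd^{(D)}$, to $e^{-2\pi i r\ell/((b/d)d_D)}$. You rightly flag this as a danger zone, but the mechanism is the cancellation between the archimedean and finite phases, not a translation inside the archimedean integral. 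With these two repairs your outline matches the paper's proof.
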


\noindent{\em Remark:}
We give an expression for $I_{\delta_t}(s)$
  in \eqref{Idt} below.  As in \cite{petrr}, this can
  be used in principle to compute the sum over $t$ to any level of precision.\\

  The absolute convergence was proven in Proposition \ref{conv}.
To begin the computation, write $\delta =\delta_t$, and consider the finite part given by \eqref{Ifin}:
\[I_\delta(s)_{\fin}=\sum_{\ell\in\Q^+}\left(\frac n{\ell^2}\right)^{s-k/2}
  \int_{\Zhat^*}\int_{\Af}f_{\fin}(
\mat{\frac{nut}\ell}{\frac{nutx}\ell -\frac{nu}\ell}{\ell}{\ell x}) 
  \ol{\theta_{\fin}(rx)}dx\,\chi^*(u)d^*u.\]
We will show that this vanishes unless $t\in \tfrac{N}{nD}\Z$.  In anticipation of this,
  write $t=\tfrac{Nb}{nD}$, where (for now) $b\in\Q^*$.
Since the determinant of the matrix is $nu\in n\Zhat^*$, by \eqref{fsup}
  the integrand vanishes unless $\ell\in N\Z^+$ and $\ell x\in\Zhat$.
  Therefore we shall set $\ell =Nd$, and replace $x$ by $\ell^{-1}x=x/Nd$, so 
  $dx$ becomes $d(x/Nd)=|Nd|_{\fin}^{-1}dx=Nd\cdot dx$.
  The above then becomes
\begin{equation}\label{dsum}
\sum_{d\in \Z^+}\frac{n^{s-k/2}Nd}
{(Nd)^{2s-k}}
  \int_{\Zhat^*}\int_{\Zhat}
 f_{\fin}(\mat{\frac{ub}{dD}}{\frac{ubx}{Nd^2D}-\frac{nu}{Nd}}{Nd}{x})
  \ol{\theta_{\fin}(\tfrac{rx}{Nd})}dx\,\chi^*(u)d^*u.
\end{equation}
We will show that the integrand vanishes unless $b\in\Z$ and $d|b$.
We now work locally.  The $p$-th factor of the double integral is
\begin{equation}\label{fpt}
  \int_{\Z_p^*}\int_{\Z_p}
 f_{p}(\mat{\frac{ub}{dD}}{\frac{ubx}{Nd^2D}-\frac{nu}{Nd}}{Nd}{x})
  \ol{\theta_{p}(\tfrac{rx}{Nd})}dx\,\chi_p(u)d^*u.
\end{equation}

\subsection{Local computation at $p|D$}

  Suppose first that $p|D$.  Then $N$ is a unit, so replacing $u$ by $Nu$, 
  \eqref{fpt} becomes
\begin{equation}\label{fptD}  \int_{\Z_p^*}\int_{\Z_p}
 f_{p}(\mat{\frac{Nub}{dD}}{\frac{ubx}{d^2D}-\frac{nu}{d}}{Nd}{x})
  \ol{\theta_{p}(\tfrac{rx}{Nd})}dx\,\chi_p(Nu)d^*u.
\end{equation}
Recall that $f_p=\sum_{m\in (\Z_p/D\Z_p)^*} f_{p,m}$, where $f_{p,m}=f^\chi_{p,m}$ is
  given in \eqref{fchipm}.  
  Fix $m$ and consider 
\[ f_{p,m}(\mat{\frac{Nub}{dD}}{\frac{ubx}{d^2D}-\frac{nu}{d}}{Nd}{x})
=f_{p,m}(\smat{1}{-\frac mD}01 \mat{\frac{Nub}{dD}+\frac{Ndm}D}
  {\frac{ubx}{d^2D}-\frac{nu}{d}+\frac {mx}D}{Nd}{x}).\]
By \eqref{fpsupp} and the fact that the determinant of the rightmost 
  matrix is $nNu\in \Z_p^*$, this is nonzero if and only if
   the rightmost matrix belongs to $K_p$,  or equivalently:
\[\begin{array}{lll}
(\text{i})\quad \tfrac{ub}{dD} +\tfrac {dm}D\in\Z_p\\
(\text{ii})\quad \tfrac x{d}(\tfrac{ub}{dD}+\tfrac{dm}D)-\tfrac {nu}{d}\in\Z_p.
\end{array}\]
We assume henceforth that these conditions hold.
Notice that if $p\nmid d$, the first condition already implies the second.
On the other hand, since $x\in\Z_p$,
\[p|d, \text{(i)}, \text{(ii)}\implies \begin{array}{lll}
(\text{iii})\quad x\in\Z_p^*\\
(\text{iv})\quad (\tfrac{ub}{dD} +\tfrac{dm}D)\in\Z_p^*.
\end{array}\]
Letting $D_p=\ord_p(D)$, and similarly for $b_p,d_p$,
  we find by condition (i) (if $d_p=0$) and condition
  (iv) (if $d_p>0$) that
\begin{equation}\label{bD}
p|D\implies\begin{cases} b_p=d_p+D_p&\text{if }d_p> D_p\\
b_p\ge 2D_p &\text{if }d_p=D_p\\
b_p=2d_p&\text{if }0\le d_p <D_p.
\end{cases}
\end{equation}

Suppose first that $p\nmid d$, so that by \eqref{bD}, $d_p=b_p=0$.
Then condition (i) is equivalent to 
\[m\equiv \frac{-bu}{d^2}\mod D\Z_p,\]
$d,b,u,m$ being units.  So given $u$, there is exactly one $m$
  for which the above condition holds, and by \eqref{fchipm} the inner integral
  of \eqref{fptD} is equal to
\[  \int_{\Z_p}\tfrac{\ol{\chi_p(m)}}{\tau(\ol\chi)_p}\ol{\theta_p(\tfrac{rx}{Nd})}dx
  =\frac{\ol{\chi_p(\frac{-b}{d^2})}}
  {\tau(\ol{\chi}_p)}\ol{\chi_p(u)}\]
since $\theta_p(\tfrac{rx}{Nd})=1$.
Therefore the double integral \eqref{fpt} is equal to
\begin{equation}\label{dp0}
\chi_p(N)\frac{\chi_p(\frac{-d}{b/d})}{\tau(\ol\chi)_p}
  \int_{\Z_p^*}\ol{\chi_p(u)}\chi_p(u)d^*u
=\frac{\chi_p(\frac{-Nd}{b/d})}{\tau(\ol\chi)_p}.
\end{equation}

Now suppose $p|d$. In view of (ii) and (iv), we have
\[x\in \frac{Dnu}{u\frac {b}d+d{m}}+d\Z_p\subset \Z_p^*.\]
This is the only condition on $x$ required for the $f_{p,m}$-term to be nonzero.
Make the substitution $x= \frac{Dnu}{ub/d+dm}+d\cdot w$, so $dx=|d|_pdw=|Nd|_pdw$. 
The value of $f_{p,m}$ is $\tfrac{\ol{\chi_p(m)}}{\tau(\ol\chi)_p}$,
  so assuming (iv) holds, the inner integral in 
  \eqref{fptD} is equal to
\[|Nd|_p\sum_m
  \frac{\ol{\chi_p(m)}}{\tau(\ol\chi)_p}
 \ol{\theta_p(\tfrac{rDnu}{Nub+Nd^2m})}
  \int_{\Z_p} \ol{\theta_p(\tfrac{rw}N)}dw.\]
The latter integral has value $1$ since $r\in \Z^+$ and $N$ is a unit.
The variable $u$ ranges through the set 
 $U_{b,d,m}=(-\tfrac{d^2m}b+\tfrac{dD}b\Z_p^*)\cap \Z_p^*$
  determined by condition (iv) above.  By considering the possibilities for $d_p>0$
  listed in \eqref{bD}, we find easily that
\[U_{b,d,m} = \left\{
\begin{array}{cl} 
  \Z_p^*&\text{if }d_p>D_p\text{ (so }b_p=d_p+D_p)\\
-\frac{d^2m}b+\frac Dd\Z_p^*&
  \text{if } 0<d_p<D_p \text{ (so }b_p=2d_p)\\
   \Z_p^*&\text{if }d_p=D_p \text{ and }b_p>2D_p\\
\displaystyle\bigcup_{a\in (\Z/p\Z)^*,\atop {a\not\equiv -\frac{d^2m}{b}\mod p}}(a+p\Z_p)&
  \text{if } d_p=D_p\text{ and }b_p=2D_p.
\end{array}\right.\]
The double integral \eqref{fpt} is equal to
\[
\frac{|Nd|_p} {\tau(\ol\chi)_p}
\sum_{m\in(\Z_p/D\Z_p)^*}\ol{\chi_p(m)}
  \int_{U_{b,d,m}}\chi_p(Nu)\ol{\theta_p(\tfrac{rDnu}{Nub+Nd^2 m})}d^*u.
\]
Noting that $U_{b,d,m}=mU_{b,d,1}$, we can replace $u$ by $mu$ and integrate
  over $U_{b,d,1}$. 
  This has the effect of cancelling every $m$, so that the above is
\begin{equation}\label{IdtD}
\frac{|Nd|_p\varphi_p(D)} {\tau(\ol\chi)_p}
  \int_{U_{b,d,1}}\chi_p(Nu)\ol{\theta_p(\tfrac{rDnu}{Nub+Nd^2})}d^*u.
\end{equation}
We leave this as something that could be computed given $\chi_p$, if desired.
  For our purposes, it will be enough to bound the integral trivially by $1$.
(We do not think that a more careful treatment
  of the integral can yield enough power saving in $D$
  to enable the type of hybrid subconvexity bound mentioned in the introduction.)

\subsection{Local computation at $p\nmid D$}

Now we suppose $p\nmid D$.  In this case, $\chi_p$ is unramified, so \eqref{fpt}
is equal to
\begin{equation}\label{fpnN}
  \int_{\Z_p^*}\int_{\Z_p}
 f_{p}(\mat{\frac{ub}{dD}}{\frac{ubx}{Nd^2D}-\frac{nu}{Nd}}{Nd}{x})
  \ol{\theta_{p}(\tfrac{rx}{Nd})}dx\,d^*u.
\end{equation}
Since the support of $f_p$ is $Z(\Q_p)M(n,N)_p$ and the determinant of the above matrix
  is $nu\in n\Z_p^*$, the integrand is nonzero precisely when
\[\begin{array}{l}
(\text{i})\quad \tfrac{ub}{dD}\in\Z_p\\
(\text{ii})\quad \tfrac{x}{Nd}(\tfrac{ub}{dD})-\tfrac{nu}{Nd}\in\Z_p.
\end{array}
\]
Both conditions are in fact independent of $u$.
By (i), we see that $0\le d_p \le b_p$ since $p\nmid D$.
    Together with \eqref{bD}, this proves our assertion
  that $I_\delta(s)$ vanishes unless $b\in \Z$, and that 
  the sum in \eqref{dsum} can be taken just over $d|b$.  
Since $u,D\in\Z_p^*$, condition (ii) is equivalent to
\begin{equation}\label{bdx}
\tfrac bdx\in ({Dn}+{Nd}\Z_p)\cap \tfrac bd\Z_p.
\end{equation}
(If $p|N$, this is possible only if $d_p=b_p$.)
  Applying the local analog of \eqref{coset} to \eqref{bdx}, and then 
  dividing by $\tfrac bd$, we find that
\[x\in \begin{cases} c+\tfrac{Nd}{\gcd(b/d,Nd)}\Z_p&\text{if } \gcd_p(b/d,Nd)|Dn\\
\emptyset&\text{otherwise,}\end{cases}
\]
 where $\gcd_p$ denotes the $p$-part of the $\gcd$, and $c\in\Z$ is given by
\[\tfrac bdc\equiv Dn\mod Nd\Z_p.\]
We shall specify $c$ further as follows, so that the above holds simultaneously for
  all $p\nmid D$.
  Write $d=d^{(D)}d_D$, where $(D,d^{(D)})=1$ and $d_D=\prod_{p|D}p^{d_p}$.
    Then we take $c\in\Z$ so that:
\begin{equation}\label{c}
\begin{cases}\tfrac bdc\equiv Dn\mod Nd^{(D)}\Z\\
c\equiv 0\mod d_D\Z.\end{cases}
\end{equation}
 It is not hard to see that such $c$ exists under the hypothesis $\gcd_p(b/d,Nd)|Dn$
  for all $p\nmid D$.  Indeed, $\prod_{p\nmid D}\gcd_p(\tfrac bd,Nd)
  =\gcd(\tfrac bd,Nd^{(D)})|Dn$, which implies
  the existence of an integer $c$ satisfying the 
  first congruence.  If necessary we can 
  mulitply $c$ by $d_D\ol{d_D}\equiv 1\mod Nd^{(D)}$ to further ensure that $c\in d_D\Z$.

The first congruence in \eqref{c} implies that $b/d$ is relatively prime to $N$.
Therefore $\psi_p(x)=\psi_p(c)$.
Since $\meas(\Z_p^*)=1$ and everything is independent of $u\in \Z_p^*$,
   the double integral \eqref{fpnN} is equal to
\[
\nu_p(N)\psi_p((c_N)_p)\int_{c+\frac{Nd}{\gcd(b/d,Nd)}\Z_p}\ol{\theta_p(\tfrac{rx}{Nd})}dx,
\]
  where we used the formula \eqref{fnp} for $f_p$.  Now let
  $x=c+\tfrac{Nd}{\gcd(b/d,Nd)}w$.  Then the above is
\begin{equation}\label{IdtN}
=\nu_p(N)\psi_p((c_N)_p)\left|\tfrac{Nd}{\gcd(b/d,Nd)}\right|_p\ol{\theta_p(\tfrac{rc}{Nd})}
  \int_{\Z_p}\ol{\theta_p(\tfrac{rw}{\gcd(b/d,Nd)})}dw.
\end{equation}
The integral is nonzero (hence equal to $1$) if and
  only if $\gcd_p(b/d,Nd)|r$.

\subsection{The finite part}
Multiply the local factors \eqref{IdtD} (resp. \eqref{dp0}) and \eqref{IdtN}, together
  with the coefficient of the double integral in \eqref{dsum}.  We set
\begin{equation}\label{Jchi}
J_\chi(\tfrac bd,d)=\prod_{p|D}J_p(\tfrac bd,d),
\end{equation}
where $J_p(\tfrac bd,d)$ denotes the integral in \eqref{IdtD} 
   if $d_p>0$
  (resp. the quantity $\frac{\chi_p(\frac{-Nd}{b/d})}{\varphi_p(D)}$
 if $d_p=0$).
  We find:
\begin{align*} I_{\delta_t}(s)_{\fin}
  =  \hskip -.5cm \sum_{d|b \text{ satisfying } \eqref{bD},
\atop{ \gcd(\frac bd,Nd^{(D)})|(r,n)}}
  &\frac{n^{s-k/2}Nd} {(Nd)^{2s-k}}J_\chi(\tfrac bd,d)
  \prod_{p|D}\frac{|Nd|_p\varphi_p(D)}{\tau(\ol\chi)_p}\\
&\times \prod_{p\nmid D}
\nu_p(N)\psi_p((c_N)_p)\left|\tfrac{Nd}{\gcd(\frac bd,Nd)}\right|_p
  \ol{\theta_p(\tfrac{rc}{Nd})}.
\end{align*}
Here $d^{(D)}=\prod_{p\nmid D}p^{d_p}$ as before.
We can make a few simplifications.
First, 
\[\prod_{p\nmid D}\psi_p((c_N)_p)=\psi^*(c_N)=\psi(c)=\frac{\psi(nD)}{\psi(b/d)}\]
since $\tfrac bdc\equiv nD\mod N$ and $(\tfrac bd,N)=1$ by \eqref{bdx}.
By the second congruence of \eqref{c}, namely $d_D|c$, we have
   $\theta_p(\tfrac{rc}{Nd})=1$ for all $p|D$.  Hence
\[\prod_{p\nmid D} \ol{\theta_p(\tfrac{rc}{Nd})}=\ol{\theta_{\fin}(\tfrac{rc}{Nd})}
  =\theta_\infty(\tfrac{rc}{Nd})=e^{-\frac{2\pi i rc}{Nd}}.\]
Therefore
\begin{equation}\label{dtfin}
I_{\delta_t}(s)_{\fin} = \frac{n^{s-k/2}\varphi(D)\nu(N)}
  {N^{2s-k}\tau(\ol\chi)}
    \hskip -.5cm \sum_{d|b \text{ satisfying } \eqref{bD},
\atop{ \gcd(\frac bd,Nd^{(D)})|(r,n)}}
\hskip -.1cm \frac{\psi(nD)}{\psi(\frac bd)}
  \frac{\gcd(\tfrac bd,Nd^{(D)})}{d^{2s-k}
  e^{\frac{2\pi i rc}{Nd}}}J_\chi(\tfrac bd,d).
\end{equation}

\subsection{Archimedean integral and global expression}

Finally, we consider the archimedean integral
$I_{\delta_t}(s)_\infty$. 
By \eqref{Iinf} and the proof of Proposition 3.7 of \cite{petrr}, we have
\[I_{\delta_t}(s)_\infty=\ol{\frac{(4\pi r)^{k-1}t^{\ol s-k}}{(k-2)!\,e^{2\pi r}}
 e^{-i\pi \ol s/2}e^{-2\pi i r/t}{}_1f_1(\ol s;k;2\pi ir/t)},\]
where $t^s=e^{i\pi s}|t|^s$ if $t<0$.
    Therefore
\[I_{\delta_t}(s)_\infty=\frac{(4\pi r)^{k-1}t^{s-k}e^{i\pi s/2}e^{2\pi i r/t}}{(k-2)!\,e^{2\pi r}}
{{}_1f_1(s;k;-2\pi ir/t)},
\]
where now $t^s=e^{-i\pi s}|t|^s$ if $t<0$.
 By the discussion above, we can take $t=Nb/nD$, so 
\begin{equation}\label{Idtinf}
I_{\delta_t}(s)_\infty=
\frac{(4\pi r)^{k-1}N^{s-k}e^{i\pi s/2}}{(k-2)!\,e^{2\pi r}n^{s-k}D^{s-k}}
b^{s-k}e^{\frac{2\pi i rnD}{Nb}}{{}_1f_1(s;k;\tfrac{-2\pi irnD}{Nb})}.
\end{equation}
When we multiply by the finite part \eqref{dtfin}, the terms $e^{2\pi i rnD/Nb}$
  and $e^{-2\pi i rc/Nd}$ combine as follows.  By \eqref{c} we can
  write $(\tfrac bd)c=nD+\ell Nd^{(D)}$, where $\ell\in\Z$ is an integer satisfying
\[\ell Nd^{(D)} \equiv -nD\mod (\tfrac bd)d_D.\]
  Conversely, any $\ell$ satisfying the above determines an integer $c$ satisfying
  \eqref{c}.  Then
\[e^{\frac{2\pi i rnD}{Nb}}e^{-\frac{2\pi i rc}{Nd}}=e^{\frac{2\pi i r(nD-\frac bdc)}{Nb}}
  =e^{\frac{-2\pi i r\ell Nd^{(D)}}{Nb}}=e^{\frac{-2\pi i r \ell }{(b/d)d_D}}.\]
Multiplying \eqref{Idtinf} by the finite part \eqref{dtfin}, we find, for $t=\tfrac{Nb}{nD}$,
\begin{align}\label{Idt}
\notag I_{\delta_t}(s)=& 
\frac{(4\pi r)^{k-1}N^{s-k}e^{i\pi s/2}}{(k-2)!\,e^{2\pi r}n^{s-k}D^{s-k}}
  b^{s-k} {{}_1f_1(s;k;\tfrac{-2\pi irnD}{Nb})}\\
  &\times 
 \frac{n^{s-k/2}\varphi(D)\nu(N)}
  {N^{2s-k}\tau(\ol\chi)}
\sum_{d|b \text{ sat.} \eqref{bD},
\atop{ \gcd(\frac bd,Nd^{(D)})|(r,n)}}
\frac{\psi(nD)\gcd(\frac bd,Nd^{(D)})}{\psi(\frac bd)d^{2s-k}
e^{\frac{2\pi i r\ell }{(b/d)d_D}}}J_\chi(\tfrac bd,d).
\end{align}
Writing $b=ad$, the condition \eqref{bD} becomes \eqref{aD}.
Summing over $t$, we see that
$\frac{e^{2\pi r}}{\nu(N)n^{1-\frac k2}}\sum_{t\in\Q^*}I_{\delta_t}(s)$ is equal to
\[  \hskip -.3cm
 \frac{(4\pi rn)^{k-1}\varphi(D)\psi(nD)e^{i\pi s/2}}{N^sD^{s-k}(k-2)!\,\tau(\ol\chi)}
\hskip -.4cm
\sum_{a\neq 0,d>0\text{ sat.}\eqref{aD},\atop{\gcd(a,Nd^{(D)})|\gcd(r,n)}}
\hskip -.4cm\frac{a^{s-k}\gcd(a,Nd^{(D)})}{d^s\psi(a)e^{\frac{2\pi i r\ell }{ad_D}}}J_\chi(a,d)
{}_1f_1(s;k;\tfrac{-2\pi irnD}{Nad}),
\]
where $a^s=e^{-i\pi s}|a|^s$ if $a<0$.
This completes the proof of Proposition \ref{deltat}.

\section{Asymptotics}\label{Asymp}

Grouping $a$ with $-a$, we rewrite the above sum as follows:
\[\sum_{a,d>0\text{ sat.}\eqref{aD},\atop{\gcd(a,Nd^{(D)})|\gcd(r,n)}}
\hskip -.4cm\left[\frac{a^{s-k}}{\psi(a)e^{\frac{2\pi i r\ell }{ad_D}}}J_\chi(a,d)
{}_1f_1(s;k;\tfrac{-2\pi irnD}{Nad})\right.\]
\[\left.+\frac{e^{-i\pi s}(-1)^ka^{s-k}}{\psi(-1)\psi(a)e^{\frac{-2\pi i r\ell }{ad_D}}}J_\chi(-a,d)
{}_1f_1(s;k;\tfrac{2\pi irnD}{Nad})\right]\frac{\gcd(a,Nd^{(D)})}{d^s}.\]
   From the integral representation \eqref{1f1}, we see that
\begin{equation}\label{B1}
  |{}_1f_1(s,k,2\pi i w)|\le B(\sigma,k-\sigma)\le 1
\end{equation}
 when $1\le \sigma\le k-1$, where $B(x,y)=\int_0^1u^{x-1}(1-u)^{y-1}du
  =\tfrac{\Gamma(x)\Gamma(y)}{\Gamma(x+y)}$ is the Beta function.
  Because $|J_\chi(a,d)|\le 1$, 
  the absolute value of the above is
\[\le \gcd(r,n)B(\sigma,k-\sigma)(1+e^{\pi\tau})\sum_{a,d>0}a^{\sigma-k}d^{-\sigma}
  \qquad(s=\sigma+i\tau).\]
Using $|e^{i\pi s/2}|(1+e^{\pi \tau})
  =2\cosh(\pi\tau/2)$,
  we obtain the following proposition.

\begin{proposition}\label{bound} Write $s=\sigma+i\tau$ for $1<\sigma<k-1$.
  Then the term $E$ given in Proposition \ref{deltat} satisfies the bound
\[
|E|\le \frac{(4\pi rn)^{k-1}D^{k-\sigma-\frac12}\varphi(D)\gcd(r,n)B(\sigma,k-\sigma)}
   {N^\sigma (k-2)!}
  2\cosh(\tfrac{\pi\tau}2)\zeta(k-\sigma)\zeta(\sigma).\]
\end{proposition}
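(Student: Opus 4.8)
The plan is to start from the closed form for $E$ furnished by Proposition \ref{deltat}, in the regrouped shape written at the top of this section after ``Grouping $a$ with $-a$'', pass to absolute values term by term inside the double sum, and then majorize everything by a product of two Riemann zeta values. First I would take the modulus of the outer constant: using $|\psi(nD)|\le 1$, $|e^{i\pi s/2}|=e^{-\pi\tau/2}$, $|N^s|=N^\sigma$, and $|D^{s-k}|=D^{\sigma-k}$ in the denominator, together with the identity $|\tau(\ol\chi)|=\sqrt D$ valid since $\chi$ is primitive (recorded in \S3). The last two contributions combine to produce the power $D^{k-\sigma-\frac12}$, while $\varphi(D)$ survives untouched, giving exactly the prefactor in the stated bound.

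Next I would bound the summand. The key analytic input is the estimate \eqref{B1}, namely $|{}_1f_1(s;k;2\pi iw)|\le B(\sigma,k-\sigma)$ for real $w$ and $1\le\sigma\le k-1$, which is immediate from the integral representation \eqref{1f1}: on $[0,1]$ the exponential factor has modulus $1$, while $x^{\sigma-1}(1-x)^{k-\sigma-1}\ge 0$ integrates to $B(\sigma,k-\sigma)$. Since the two confluent hypergeometric factors in the regrouped sum have purely imaginary arguments $\pm 2\pi i\tfrac{rnD}{Nad}$, each is bounded by $B(\sigma,k-\sigma)$. I would also invoke $|J_\chi(a,d)|\le 1$, which holds because $J_\chi$ is a product of local factors of modulus at most $1$ (see the discussion around \eqref{Jchi}), and $|\psi(a)|=1$, since the summation conditions force $a$ to be coprime to $N$.

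Then I would dispose of the arithmetic constraints crudely. The condition $\gcd(a,Nd^{(D)})\mid\gcd(r,n)$ gives $\gcd(a,Nd^{(D)})\le\gcd(r,n)$, so that factor pulls out of the sum; dropping the remaining constraint \eqref{aD} and the divisibility condition only enlarges the double series, which then becomes $\sum_{a,d>0}a^{\sigma-k}d^{-\sigma}=\zeta(k-\sigma)\zeta(\sigma)$, convergent precisely because $1<\sigma<k-1$. Reassembling, the two phase-dependent halves of the $a\leftrightarrow -a$ pairing combine through $e^{-\pi\tau/2}(1+e^{\pi\tau})=2\cosh(\tfrac{\pi\tau}2)$, and collecting all constants yields the claimed inequality. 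The argument is essentially bookkeeping; the only step demanding a little care is tracking the $D$-aspect — the interplay of $\varphi(D)$, the reciprocal Gauss sum factor $D^{-1/2}$, and the power $D^{k-\sigma}$ coming from $D^{s-k}$ in the denominator of $E$ — together with confirming that the elementary bound on ${}_1f_1$ is genuinely uniform in its argument; neither is a real obstacle.
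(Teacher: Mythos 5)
Your proposal is correct and follows essentially the same route as the paper: take absolute values termwise in the regrouped sum over $a,d>0$, use the integral representation \eqref{1f1} to get $|{}_1f_1|\le B(\sigma,k-\sigma)$, bound $|J_\chi|\le 1$ and $\gcd(a,Nd^{(D)})\le\gcd(r,n)$, drop the summation constraints to obtain $\zeta(k-\sigma)\zeta(\sigma)$, and combine $|e^{i\pi s/2}|=e^{-\pi\tau/2}$ with the factor $e^{\pi\tau}$ from $|a^{s-k}|$ at negative $a$ to produce $2\cosh(\tfrac{\pi\tau}2)$. The bookkeeping of the $D$-powers ($D^{k-\sigma}$ from $|D^{s-k}|^{-1}$ and $D^{-1/2}$ from $|\tau(\ol\chi)|$) matches the paper exactly.
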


  Theorem \ref{main} now follows immediately.
  In order to prove Corollary \ref{cor}, we must show that the quotient $Q=\frac{E}F$
  has the limit $0$ as $N+k\to\infty$, where 
   $F$ is the first geometric term of \eqref{sum}, and
 $E$ is the error term of \eqref{sum} discussed above.
  We take $k\ge 3$, $N>1$ and $\gcd(n,r)=1$, so for 
  $\frac{k-1}2<\sigma<\frac{k+1}2$, we have 
  $|F|=\frac{2^{k-1}(2\pi rn)^{k-\sigma-1}|\Gamma(s)|}{(k-2)!}$.
  Thus by the above proposition and \eqref{B1},
\begin{equation}\label{Q}
|Q|=|\tfrac{E}F|\ll_{D,\tau} \frac{D^{k-\sigma}(2\pi rn)^\sigma}
  {N^\sigma |\Gamma(s)|}\zeta(k-\sigma)\zeta(\sigma).
\end{equation}
We write $\sigma=\tfrac k2+\delta$ for $|\delta|<\tfrac12$.
  Then each zeta factor is bounded by the constant $\zeta(\tfrac32-|\delta|)$.
By Stirling's approximation (\cite{AS}, 6.1.39),
\[\Gamma(s)^{-1}=\Gamma(\tfrac k2+\delta+i\tau)^{-1}\sim
   \frac{e^{k/2}}{\sqrt{2\pi}(k/2)^{\frac k2+\delta+i\tau-\frac12}}\]
as $k\to\infty$.
With \eqref{Q}, this shows that
\[|Q|\ll \frac{(4D\pi rne)^{k/2}}
  {N^{\frac{k-1}2} k^{\frac k2-1}}
\]
  where the implied constant depends on $\delta,D,r,n,\tau$.
This clearly goes to $0$ as $N+k\to\infty$.

\small


\begin{thebibliography}{99}
\bibitem[Ak]{Ak} A. Akbary, {\em Non-vanishing of weight $k$ modular $L$-functions
  with large level,} J. Ramanujan Math. Soc. 14 (1999), no. 1, 37--54.

\bibitem[AS]{AS} N. Abramowitz and I. Stegun, editors, {\em Handbook of mathematical
  functions,} Dover Publications, New York, 1965.


\bibitem[BEN]{BEN} M. Bennett, J. Ellenberg, and  N. Ng, 
{\em The Diophantine equation $A^4+2^\delta B^2=C^n$},
Int. J. Number Theory 6 (2010), no. 2, 311--338. 

\bibitem[Bu]{bump} D. Bump, {\em Automorphic forms and representations},
Cambridge Studies in Advanced Mathematics 55, Cambridge University Press, 1998.

\bibitem[Du]{Du} W. Duke, {\em The critical order of vanishing of automorphic $L$-functions
  with large level,} Invent. Math. 119 (1995), no. 1, 165--174.

\bibitem[El1]{El1} J. Ellenberg, {\em Galois representations attached to $\Q$-curves
   and the generalized Fermat equation $A^4+B^2=C^p$},
Amer. J. Math. 126 (2004), no. 4, 763--787. 

\bibitem[El2]{El2} ------, {\em On the error term in Duke's estimate for the
  average special value of $L$-functions,} Canad. Math. Bull. 48 (2005), no. 4, 535--546.

\bibitem[FW]{FW} B. Feigon and D. Whitehouse, {\em Averages of central $L$-values
  of Hilbert modular forms with an application to subconvexity,} 
Duke Math. J. 149 (2009), no. 2, 347--410. 

\bibitem[Fo]{Fo} O. Fomenko, {\em Application of the Petersson formula for a bilinear 
  form in Fourier coefficients of parabolic forms,} translation appearing in
  J. Math. Sci. 79 (1996), no. 5, 1359--1372.

\bibitem[Gu]{Gu} J. Guo, {\em On the positivity of the central critical values 
  of automorphic $L$-functions for $\GL(2)$},
  Duke Math. J. 83 (1996), no. 1, 157--190. 


\bibitem[HT]{HT} R. Holowinsky and N. Templier, {\em First moment of Rankin-Selberg 
  central $L$-values and subconvexity in the level aspect,}
Ramanujan J. 33 (2014), no. 1, 131--155.

\bibitem[ILS]{ILS} H. Iwaniec, W. Luo, and P. Sarnak, 
{\em Low lying zeros of families of $L$-functions,}
Inst. Hautes \'Etudes Sci. Publ. Math. No. 91 (2000), 55--131.

\bibitem[IM]{IM} H. Iwaniec and P. Michel, {\em The second moment of the symmetric
  square $L$-functions,} Ann. Acad. Sci. Fenn. Math. 26 (2001), no. 2, 465--482. 

\bibitem[IS1]{IS1} H. Iwaniec and P. Sarnak, {\em The non-vanishing of 
  central values of automorphic $L$-functions and Landau-Siegel zeros,}
  Israel J. Math. 120 (2000), 155--177.

\bibitem[IS2]{IS} ------, {\em Perspectives on 
  the analytic theory of $L$-functions,}  Geom. Funct. Anal.  2000,  
  Special Volume, Part II, 705--741.

\bibitem[Ka]{Ka} Y. Kamiya, {\em Certain mean values and non-vanishing of automorphic
  $L$-functions with large level,} Acta Arith. 93 (2000), no. 2, 157--176. 

\bibitem[KL1]{KL} A. Knightly  and C. Li, 
   {\em Traces of Hecke operators}, Mathematical Surveys and Monographs, 133, Amer.
  Math. Soc., 2006.

   
\bibitem[KL2]{petrr} ------, {\em Weighted averages of modular $L$-values,}
  Trans. Amer. Math. Soc. 362, no. 3 (2010), 1423--1443.

\bibitem[KL3]{ftf} ------, {\em Kuznetsov's formula and the Hecke eigenvalues of
  Maass forms,} Mem. Amer. Math. Soc. 224 (2013), no. 1055.

\bibitem[KS]{KS} W. Kohnen and J. Sengupta, {\em On quadratic character twists 
  of Hecke $L$-functions attached to cusp forms of varying weights at the central point,}
  Acta Arith. 99 (2001), no. 1, 61--66. 

\bibitem[LM]{LM} S.-C. Li and R. Masri, {\em Nonvanishing of Rankin-Selberg L-functions for 
  Hilbert modular forms,} Ramanujan J. 34 (2014), no. 2, 227--236.

\bibitem[MR]{MR} P. Michel and D. Ramakrishnan, {\em Consequences of the Gross-Zagier formulae: 
  stability of average $L$-values, subconvexity, and non-vanishing mod $p$,}
   Number theory, analysis and geometry, 437–459, Springer, New York, 2012. 

\bibitem[N]{N} P. Nelson, {\em Stable averages of central values of Rankin-Selberg 
  L-functions: some new variants,}
J. Number Theory 133 (2013), no. 8, 2588--2615. 
  
 
\bibitem[RR]{RR} D. Ramakrishnan and J. Rogawski, {\em Average values
  of modular $L$-series via the relative trace formula,}
  Pure Appl. Math. Q. 1 (2005), no. 4, 701--735.



\bibitem[Sen]{Se} J. Sengupta, {\em The central critical value of automorphic
  $L$-functions,} C. R. Math. Rep. Acad. Sci. Canada, Vol. 22 (2), 2000, 82-85.

\bibitem[Ser]{Serre} J.-P. Serre, {\em R\'epartition asymptotique des valeurs
  propres de l'op\'erateur de Hecke $T_p$,} J. Amer. Math. Soc., 10 (1997),
  no. 1, pp. 75--102.

 

\end{thebibliography}
\end{document}